\newcommand{\keywordsname}{Key words}
\newcommand{\keywords}[1]{%
\def\thekeywords{#1}%
\begin{@bstr@ctlist}
\hspace*{\abstitleskip}{\abstractnamefont\keywordsname\@bslabeldelim}\abstracttextfont\
#1%
\par\end{@bstr@ctlist}
}
\newcommand{\subjclassname}{Mathematics subject classification}
\newcommand{\subjclass}[2][2010]{%
\begin{@bstr@ctlist}
\hspace*{\abstitleskip}{\abstractnamefont\subjclassname\ (#1)\@bslabeldelim}\abstracttextfont\
#2%
\par\end{@bstr@ctlist}
}
\def\and{%				%begin{tabular}
	\end{tabular}%
	%\hskip 1em \@plus.17fil%
	and%
	\begin{tabular}[t]{c}}%
\def\thanks#1{%\footnotemark
\protected@xdef\@thanks{\@thanks
\protect\footnotetext[\the\c@footnote]{#1}}%
}
\let\addresses\@empty      %\let\thankses\@empty
\newcommand{\address}[2][]{\g@addto@macro\addresses{\address{#1}{#2}}}
\newcommand{\curraddr}[2][]{\g@addto@macro\addresses{\curraddr{#1}{#2}}}
\newcommand{\email}[2][]{\g@addto@macro\addresses{\email{#1}{#2}}}
\newcommand{\urladdr}[2][]{\g@addto@macro\addresses{\urladdr{#1}{#2}}}
\def\enddoc@text{%\ifx\@empty\@translators \else\@settranslators\fi
  \ifx\@empty\addresses \else\@setaddresses\fi}
\def\emailaddrname{E-mail address}
\def\@setaddresses{\par
  \nobreak \begingroup
%\footnotesize
%
%---[...]
%
%  \def\\{\unskip, \ignorespaces}%
  \interlinepenalty\@M
  \def\address##1##2{\begingroup%
    \par\addvspace\bigskipamount%\indent
    \@ifnotempty{##1}{(\ignorespaces##1\unskip) }%
    {\noindent\ignorespaces##2}\par\endgroup}%
%
%---[...]
%
  \def\email##1##2{\begingroup
    \@ifnotempty{##2}{\nobreak\noindent\emailaddrname
      \@ifnotempty{##1}{, \ignorespaces##1\unskip}\/:\space
      \ttfamily##2\par}\endgroup}%
%
%---[...]
%
  \addresses
  \endgroup
}
\def\cstar#1{\expandafter\@cstar\csname c@#1\endcsname}
\def\@cstar#1{\ifcase#1\or $\ast$\or $\ast\ast$\or $\ast\ast\ast$\fi}
\AddEnumerateCounter{\cstar}{\@cstar}{$\ast\ast\ast$}
\newlist{conditions}{enumerate}{1}
\setlist[conditions]{label=\normalfont(\alph*),ref=\normalfont\alph*}
\mathchardef\mhyphen="2D
\newcommand{\PB}{\mathbb{P}}
\newcommand{\R}{\mathbb{R}}
\newcommand{\C}{\mathcal{C}}
\newcommand{\dom}{\func{dom}}
\newcommand{\pole}{\func{pole}}
\newtheorem{theorem}{Theorem}[section]
\newtheorem{corollary}[theorem]{Corollary}
\newtheorem{lemma}[theorem]{Lemma}
\theoremstyle{definition}
\newtheorem{convention}[theorem]{Convention}
\newtheorem{definition}[theorem]{Definition}
\newtheorem{example}[theorem]{Example}
\newtheorem{notation}[theorem]{Notation}
\newtheorem{remark}[theorem]{Remark}
\DeclarePairedDelimiter\abs{\lvert}{\rvert}%
\DeclarePairedDelimiter\norm{\lVert}{\rVert}%
\let\oldabs\abs
\def\abs{\@ifstar{\oldabs}{\oldabs*}}
\let\oldnorm\norm
\def\norm{\@ifstar{\oldnorm}{\oldnorm*}}
\title{\bf Linear equations on real algebraic surfaces}
\date{}
\author{Wojciech Kucharz\thanks{The first author was partially supported
by the National Science Centre (Poland), under grant number
2014/15/B/ST1/00046. He also acknowledges with gratitude support and
hospitality of the Max--Planck--Institut f\"ur Mathematik in Bonn.} \and
Krzysztof Kurdyka\thanks{The second author was partially supported by
ANR project STAAVF (France).}}
\address{Wojciech Kucharz\\Institute of Mathematics\\Faculty of Mathematics and Computer
Science\\Jagiellonian University\\\L{}ojasiewicza 6\\30-348
Krak\'ow\\Poland}
\email{Wojciech.Kucharz@im.uj.edu.pl}
\address{Krzysztof Kurdyka\\Laboratoire de Math\'ematiques\\UMR 5175 du
CNRS\\Universit\'e  Savoie Mont Blanc \\Campus Scientifique\\73 376 Le
Bourget-du-Lac Cedex\\France}
\email{kurdyka@univ-savoie.fr}
\begin{document}
\maketitle
\thispagestyle{empty}

\begin{abstract}
We prove that if a linear equation, whose coefficients are continuous
rational functions on a nonsingular real algebraic surface, has a
continuous solution, then it also has a continuous rational solution.
This is known to fail in higher dimensions.
\end{abstract}
%\cleardobulepage

\keywords{Linear equation, continuous rational solution, real algebraic
variety.}
\hypersetup{pdfkeywords={\thekeywords}}
\subjclass{14P25, 14E05, 26C15, 13A15.}

\section{Introduction}\label{sec:1}

Fefferman and Koll\'ar \cite{bib4} study the following problem. Given
continuous functions $f_1, \ldots, f_r$ on $\R^n$, which continuous
functions $\varphi$ can be written in the form
\begin{equation*}
\varphi = \varphi_1 f_1 + \cdots + \varphi_r f_r,
\end{equation*}
where the $\varphi_i$ are continuous functions on $\R^n$? Moreover, if
$\varphi$ and the $f_i$ have some regularity properties, can one choose
the $\varphi_i$ to have the same (or weaker) regularity properties? In
other words, the questions are about solutions of linear equations of
the form
\begin{equation*}
f_1 y_1 + \cdots + f_r y_r = \varphi.
\end{equation*}

The problem is hard even if $\varphi$ and the $f_i$ are polynomial
functions. In \cite{bib4}, two different ways to solve the problem are
presented: the Glaeser--Michael method and the algebraic geometry
approach. Each of them consists of a rather complex procedure and it
does not seem possible to give a concise answer in general.

In this note we settle the problem in a simple manner, assuming that
$n=2$ and the $f_i$ are continuous rational functions. Actually, our
results are more general and settle the corresponding problem for
functions defined on any nonsingular real algebraic surface.

A~complex version of the problem under consideration was studied by
Brenner \cite{bib3}, Epstein and Hochster \cite{new}, and Koll\'ar
\cite{bib8}.

\begin{convention}\label{conv-1-1}
By a function we always mean a real-valued function.
\end{convention}

\begin{notation}\label{not-1-2}
If $f_1, \ldots, f_r$ are functions defined on some set $S$, then
\begin{equation*}
Z(f_1, \ldots, f_r) \coloneqq \{ x \in S \mid f_1(x)=0, \ldots, f_r(x)=0
\}.
\end{equation*}
\end{notation}

We now recall the \emph{pointwise test} (or PT for short) introduced in
\cite[p.~235]{bib4}.

\begin{definition}\label{def-1-3}
Let $\Omega$ be a metric space and let $f_1, \ldots, f_r$ be continuous
functions on $\Omega$. We say that a continuous function $\varphi$ on
$\Omega$ satisfies the PT for the functions  $f_i$ if for every point $p \in
\Omega$, the following two equivalent conditions hold:
\begin{conditions}
\item\label{def-1-3-a} The function $\varphi$ can be written as
\begin{equation*}
\varphi = \psi_1^{(p)} f_1 + \cdots + \psi_r^{(p)} f_r,
\end{equation*}
where the $\psi_i^{(p)}$ are functions on $\Omega$ that are continuous at
$p$.

\item\label{def-1-3-b} The function $\varphi$ can be written as
\begin{equation*}
\varphi = \varphi^{(p)} + c_1^{(p)} f_1 + \cdots + c_r^{(p)} f_r,
\end{equation*}
where $c_i^{(p)} \in \R$ and the functions $A_i^{(p)}$ defined by
\begin{equation*}
A_i^{(p)} = \frac{\varphi^{(p)}f_i}{f_1^2 + \cdots + f_r^2} \quad \textrm{on }
\Omega \setminus Z^{(p)} \quad \textrm{and} \quad A_i^{(p)}=0 \quad \textrm{on }
Z^{(p)},
\end{equation*}
with $Z^{(p)} \coloneqq Z(f_1, \ldots, f_r) \cup \{p\}$, are continuous at
$p$.
\end{conditions}
\end{definition}

Note that conditions (\ref{def-1-3-a}) and (\ref{def-1-3-b}) are indeed
equivalent. If (\ref{def-1-3-a}) holds, then so does (\ref{def-1-3-b})
with
\begin{equation*}
\varphi^{(p)} = (\psi_1^{(p)} - \psi_1^{(p)}(p)) f_1 + \cdots +
(\psi_r^{(p)} - \psi_r^{(p)}(p)) f_r \quad \textrm{and} \quad c_i^{(p)}
= \psi_i^{(p)}(p).
\end{equation*}
Conversely, (\ref{def-1-3-b}) implies (\ref{def-1-3-a}) with
$\psi_i^{(p)} = c_i^{(p)} + A_i^{(p)}$.

Clearly, the PT is a basic necessary condition for existence of
continuous functions $\varphi_1, \ldots, \varphi_r$ on $\Omega$
satisfying $\varphi = \varphi_1 f_1 + \cdots + \varphi_r f_r$.

For background on real algebraic geometry the reader may consult
\cite{bib2}. By a \emph{real algebraic variety} we mean a locally ringed
space isomorphic to an algebraic subset of $\R^n$, for some~$n$, endowed
with the Zariski topology and the sheaf of regular functions (such an
object is called an affine real algebraic variety in \cite{bib2}). Recall
that any quasi-projective real algebraic variety is a real algebraic
variety in the sense just defined, cf.
\cite[Prop.~3.2.10, Thm.~3.4.4]{bib2}. Each real algebraic variety
carries also the Euclidean topology, which is determined by the usual
metric on $\R$. Unless explicitly stated otherwise, all topological
notions relating to real algebraic varieties will refer to the Euclidean
topology.

We say that a function $f$ defined on a real algebraic variety $X$ is
\emph{continuous rational} if it is continuous on $X$ and regular on
some Zariski open dense subset of $X$. We denote by $P(f)$ the smallest
Zariski closed subset of $X$ such that $f$ is regular on $X \setminus
P(f)$. The continuous rational functions form a subring of the ring of
all continuous functions on $X$. Any regular function on $X$ is
continuous rational. The converse does not hold in general, even if $X$
is nonsingular.

\begin{example}\label{ex-1-4}
The function $f$ on $\R^2$, defined by
\begin{equation*}
f(x,y) = \frac{x^3}{x^2+y^2} \quad \textrm{for} \ (x,y) \neq (0,0) \quad
\textrm{and} \quad f(0,0)=0,
\end{equation*}
is continuous rational but it is not regular; in fact, $P(f) = \{ (0,0)
\}$.
\end{example}

Recently, continuous rational functions have attracted a lot of
attention, cf. \cite{bib1, bib5, bib6, bib9, bib10, bib11, bib12, bib13,
bib14, bib15, bib16, bib17, bib18, bib19}. On nonsingular varieties they
coincide with regulous functions introduced by Fichou, Huisman, Mangolte
and Monnier \cite{bib5}.

Our first result, to be proved in Section~\ref{sec:2}, is the following.

\begin{theorem}\label{th-1-5}
Let $X$ be a nonsingular real algebraic surface and let $f_1, \ldots,
f_r$ be continuous rational functions on $X$. For a continuous function
$\varphi$ on $X$, the following conditions are equivalent:
\begin{conditions}
\item\label{th-1-5-a} The function $\varphi$ can be written in the form
\begin{equation*}
\varphi = \varphi_1 f_1 + \cdots + \varphi_r f_r,
\end{equation*}
where the $\varphi_i$ are continuous functions on $X$.

\item\label{th-1-5-b} The function $\varphi$ satisfies the PT for the $f_i$.
\end{conditions}
\end{theorem}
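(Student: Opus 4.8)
The implication \ref{th-1-5-a}$\Rightarrow$\ref{th-1-5-b} is immediate, as already remarked after Definition~\ref{def-1-3}: if $\varphi=\varphi_1f_1+\dots+\varphi_rf_r$ with the $\varphi_i$ continuous, take $\psi_i^{(p)}=\varphi_i$ for every $p$. So I would concentrate on \ref{th-1-5-b}$\Rightarrow$\ref{th-1-5-a}. First I would pass to a resolution. Since the $f_i$ are continuous rational and $X$ is a nonsingular surface, their indeterminacy loci $P(f_i)$ are finite, and, by the structure theory of continuous rational (regulous) functions on nonsingular varieties, there is a composition of blow-ups at points $\pi\colon X'\to X$ such that each $g_i:=f_i\circ\pi$ is regular; blowing up further we may also assume that the ideal sheaf $\mathcal I=(g_1,\dots,g_r)\mathcal O_{X'}$ is invertible (principalization of an ideal on a nonsingular surface, again by point blow-ups). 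Thus locally on $X'$ one has $g_i=\delta h_i$, where $\delta$ is a regular generator of $\mathcal I$ and $h_1,\dots,h_r$ are regular functions with no common zero. The morphism $\pi$ is proper and, $X$ being a surface, is an isomorphism outside a finite set $P\subset X$; put $F:=\pi^{-1}(P)$, a finite disjoint union of connected trees of rational curves, which is the union of all positive-dimensional fibres of $\pi$. Because $\pi$ is a morphism, $\varphi\circ\pi$ satisfies the PT for the $g_i$, and a continuous function on $X'$ constant on each fibre of $\pi$ descends to a continuous function on $X$ (properness of $\pi$ gives continuity at the images of the positive-dimensional fibres). Hence it suffices to construct a continuous $\eta=(\eta_1,\dots,\eta_r)$ on $X'$ with $\sum_i\eta_ig_i=\varphi\circ\pi$ that is constant on each of the finitely many sets $\pi^{-1}(p)$, $p\in P$.

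Next I would build $\eta$ by gluing local solutions prepared for descent. Off $\pi^{-1}(Z)$, where $Z=Z(f_1,\dots,f_r)$, one has $\sum_jg_j^2>0$, so $\eta_i=(\varphi\circ\pi)g_i/\sum_jg_j^2$ is a continuous local solution. Near a point $q'\in\pi^{-1}(Z)$ with image $p=\pi(q')$, I would use the PT \emph{at} $p$: by Definition~\ref{def-1-3}\ref{def-1-3-b}, $\varphi=\varphi^{(p)}+\sum_ic_i^{(p)}f_i$ with $c_i^{(p)}\in\R$ and the functions $A_i^{(p)}$ continuous at $p$. The crucial point is that, after pulling back and using $g_i=\delta h_i$, the continuity at $p$ of the $A_i^{(p)}$ forces the quotient $(\varphi^{(p)}\circ\pi)/\delta$ (a priori defined and continuous only where $\delta\neq0$) to extend continuously across $\pi^{-1}(Z)$ near $q'$ to a function $\gamma$ that \emph{vanishes identically on} $\pi^{-1}(p)$ (it merely has a finite limit along the remaining components of $\pi^{-1}(Z)$, but that is all that is needed). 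Choosing regular $b_i$ near $q'$ with $\sum_ib_ih_i=1$, one gets
\[
\sum_i\bigl(c_i^{(p)}+\gamma b_i\bigr)g_i=\sum_ic_i^{(p)}g_i+\gamma\delta=\Bigl(\sum_ic_i^{(p)}f_i\Bigr)\circ\pi+\varphi^{(p)}\circ\pi=\varphi\circ\pi
\]
near $q'$, so $\eta^{(q')}:=\bigl(c_i^{(p)}+\gamma b_i\bigr)_i$ is a continuous local solution equal to the constant vector $c^{(p)}=(c_1^{(p)},\dots,c_r^{(p)})$ on $\pi^{-1}(p)$, and this constant depends on $p$ only. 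Choosing, in a neighbourhood of each connected component of $F$, all the local solutions of this second type with the \emph{same} base point $p\in P$, the differences of local solutions on overlaps are continuous syzygies of the $g_i$ vanishing on $F$.

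For the gluing and descent I would invoke softness: the sheaf of continuous syzygies of the $g_i$ vanishing on $F$ is a sheaf of modules over the sheaf of continuous functions on the paracompact space $X'$, hence soft, so its first cohomology vanishes and the local solutions glue to a global continuous $\eta$ on $X'$ with $\sum_i\eta_ig_i=\varphi\circ\pi$ and $\eta\equiv c^{(p)}$ on $\pi^{-1}(p)$ for every $p\in P$. Thus $\eta$ is constant on every fibre of $\pi$ and descends to continuous functions $\varphi_1,\dots,\varphi_r$ on $X$ with $\varphi=\varphi_1f_1+\dots+\varphi_rf_r$.

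The step I expect to be the real obstacle is the ``crucial point'' above: converting the genuinely \emph{pointwise} information carried by the PT (continuity of the $A_i^{(p)}$ at the single point $p$) into honest divisibility of $\varphi^{(p)}\circ\pi$ by the local monomial $\delta$ with a quotient vanishing on the whole fibre $\pi^{-1}(p)$, done uniformly enough that the softness argument yields a \emph{fibrewise-constant} global solution, so that the descent is legitimate. Securing the resolution (regularity of the $g_i$ after blow-ups, then invertibility of $(g_1,\dots,g_r)\mathcal O_{X'}$, hence $g_i=\delta h_i$ with the $h_i$ having no common zero) is the other place where ``nonsingular surface'' and ``continuous rational'' are essential: in dimension $\geq3$ indeterminacy loci need not be points, and this scheme breaks down, consistent with the failure of Theorem~\ref{th-1-5} in higher dimensions.
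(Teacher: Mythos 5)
Your route is genuinely different from the paper's. The paper never resolves the $f_i$: its Lemma~\ref{lem-2-3} uses only that the local ring of a nonsingular surface at a point is a UFD to write, Zariski-locally near $p$, $hf_i=gg_i$ with $g_i$ regular and $Z(g_1,\ldots,g_r)\subset\{p\}$; Lemma~\ref{lem-2-1} then divides $\varphi$ by $g$ while preserving the PT, and the point is that the residual common zero set is the \emph{single} point $p$, so the PT \emph{at $p$} is literally the continuity statement needed for $B_i^{(p)}=\psi^{(p)}g_i/(g_1^2+\cdots+g_r^2)$ (Lemma~\ref{lem-2-4}); an ordinary partition of unity finishes. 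Your version trades this for resolution of the indeterminacy of regulous functions plus principalization upstairs, which buys you $h_i$ with \emph{no} common zeros but costs you the descent problem along the exceptional fibres. Both gluing devices work; note that softness is more than you need, since all your local solutions near $\pi^{-1}(p)$ take the common value $c^{(p)}$ on $\pi^{-1}(p)$ and a convex combination by a partition of unity preserves that.

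The one genuine gap is exactly where you predicted it, and your stated justification for that step is wrong as written: continuity of the $A_i^{(p)}$ \emph{at the single point $p$} does not force $(\varphi^{(p)}\circ\pi)/\delta$ to extend continuously across all of $\pi^{-1}(Z)$ near $q'$. From the identity $(\varphi^{(p)}\circ\pi)/\delta=\sum_i(A_i^{(p)}\circ\pi)\,h_i$ on $\{\delta\neq0\}$ you only get that the quotient tends to $0$ along $\pi^{-1}(p)$; along the strict transform of $Z\setminus\{p\}$ the $A_i^{(p)}$ are a priori uncontrolled and could blow up. The fix is available because condition (\ref{th-1-5-b}) gives the PT at \emph{every} point: for $q\in Z$ near $p$, $q\neq p$, write $\varphi^{(p)}=\varphi^{(q)}+\sum_i(c_i^{(q)}-c_i^{(p)})f_i$, so that $(\varphi^{(p)}\circ\pi)/\delta=\sum_i(A_i^{(q)}\circ\pi)\,h_i+\sum_i(c_i^{(q)}-c_i^{(p)})h_i$ has the limit $\sum_i(c_i^{(q)}-c_i^{(p)})h_i(q'')$ at each $q''\in\pi^{-1}(q)$; existence of a limit at every point of the nowhere dense set $Z(\delta)$ then yields a continuous extension $\gamma$, vanishing on $\pi^{-1}(p)$ as required. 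With this repair (and granting the two external inputs you invoke, resolution of regulous functions by point blow-ups and principalization on a smooth surface), your argument goes through; the paper's arrangement $Z(g_1,\ldots,g_r)\subset\{p\}$ is precisely what lets it avoid ever having to use the PT along a whole curve.
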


An example of Hochster \cite[p.~236]{bib4}, which involves simple
polynomial functions on $\R^3$, shows that Theorem~\ref{th-1-5} cannot
be extended to varieties of higher dimension.

In Section~\ref{sec:3} we prove the following.

\begin{theorem}\label{th-1-6}
Let $X$ be a nonsingular real algebraic surface and let $f_1, \ldots,
f_r$ be continuous rational functions on $X$. For a continuous rational
function $\varphi$ on $X$, the following conditions are equivalent:
\begin{conditions}
\item\label{th-1-6-a} The function $\varphi$ can be written in the form
\begin{equation*}
\varphi = \varphi_1 f_1 + \cdots + \varphi_r f_r,
\end{equation*}
where the $\varphi_i$ are continuous rational functions on $X$.

\item\label{th-1-6-b} The function $\varphi$ satisfies the PT for the
$f_i$.

\end{conditions}
Furthermore, if {\normalfont(\ref{th-1-6-b})} holds, then the $\varphi_i$ in
{\normalfont(\ref{th-1-6-a})} can be chosen so that $P(\varphi_i)$ is a finite set
contained in $Z(f_1, \ldots, f_r) \cup P(f_1) \cup \ldots \cup P(f_r)
\cup P(\varphi)$.
\end{theorem}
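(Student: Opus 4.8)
The implication \ref{th-1-6-a} $\Rightarrow$ \ref{th-1-6-b} is free of rationality considerations: the PT is a necessary condition for solvability in continuous functions (the remark after Definition~\ref{def-1-3}), and continuous rational functions are continuous. So the plan is to prove \ref{th-1-6-b} $\Rightarrow$ \ref{th-1-6-a} with the stated control on $P(\varphi_i)$, and the natural starting point is Theorem~\ref{th-1-5}: assuming \ref{th-1-6-b}, it furnishes \emph{continuous} functions $g_1, \ldots, g_r$ on $X$ with $\varphi = g_1 f_1 + \cdots + g_r f_r$; in particular $\varphi$ vanishes on $W \coloneqq Z(f_1, \ldots, f_r)$. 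Write $E \coloneqq P(f_1) \cup \cdots \cup P(f_r) \cup P(\varphi)$, a finite set. After deleting the connected components of $X$ contained in $W$ (where $\varphi \equiv 0$ and one takes $\varphi_i \equiv 0$) we may assume $\dim W \le 1$, so $X \setminus W$ is Zariski dense; there the rational functions $\varphi_i^{0} \coloneqq \varphi f_i/(f_1^2 + \cdots + f_r^2)$ are defined, are regular on $X \setminus (W \cup E)$, and satisfy $\sum_i \varphi_i^{0} f_i = \varphi$.

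The key step, and where $\dim X = 2$ must be used, will be to show that \emph{each $\varphi_i^{0}$ is regular on $X$ away from a finite subset of $W \cup E$}. I would argue this along each one-dimensional irreducible component $C$ of $W$: near a general point of $C$, in local coordinates with $C = \{t = 0\}$, one has $f_i = t^{m_i} u_i$ with $u_i$ a unit and $m_i \ge 1$, hence $f_1^2 + \cdots + f_r^2 = t^{2m} v$ with $v$ a unit and $m \coloneqq \min_i m_i$, and $\varphi = t^{n} w$ with $w$ a unit, $n = \operatorname{ord}_C \varphi \ge 1$. The point is that $n \ge m$: indeed $\varphi/t^{m} = \sum_i g_i (f_i/t^{m})$ is continuous (each $f_i/t^{m} = t^{m_i - m} u_i$ is, since $m_i \ge m$), while $n < m$ would force $\varphi/t^{m} = w/t^{m-n}$ to be unbounded as $t \to 0$. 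Hence $\varphi_i^{0} = t^{\,n + m_i - 2m} w u_i/v$ with $n + m_i - 2m \ge 0$, which is regular at a general point of $C$. Thus the non-regularity locus of the rational function $\varphi_i^{0}$ lies in $W \cup E$ and contains no irreducible component of $W$, hence (being Zariski closed and containing no curve) is finite. Let $B$ be the union over $i$ of these loci, together with $E$, the singular locus of $W$, and the zero-dimensional components of $W$: a finite subset of $W \cup E$ outside of which every $f_i$, $\varphi$ and $\varphi_i^{0}$ is regular.

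With this in hand, the $\varphi_i^{0}$ will be corrected at the finitely many points of $B$ using the PT. For $b \in B$, form \ref{def-1-3-b} supplies $c_i^{(b)} \in \R$ such that $\varphi^{(b)} \coloneqq \varphi - \sum_k c_k^{(b)} f_k$ is continuous rational and $A_i^{(b)} \coloneqq \varphi^{(b)} f_i/(f_1^2 + \cdots + f_r^2)$ is continuous at $b$; set $\psi_i^{(b)} \coloneqq c_i^{(b)} + A_i^{(b)}$ and $\theta_i^{(b)} \coloneqq \psi_i^{(b)} - \varphi_i^{0}$. A direct computation gives $\theta_i^{(b)} = \bigl(\sum_j (c_i^{(b)} f_j - c_j^{(b)} f_i) f_j\bigr)/(f_1^2 + \cdots + f_r^2)$, so $\sum_i \theta_i^{(b)} f_i = 0$ by antisymmetry, and $\lvert \theta_i^{(b)} \rvert \le \lvert c_i^{(b)} \rvert + \bigl(\sum_k (c_k^{(b)})^2\bigr)^{1/2}$ on $X \setminus W$ by Cauchy--Schwarz; applying the local computation above to its numerator, $\theta_i^{(b)}$ is also regular off a finite subset of $W \cup E$, which I add to $B$. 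Now $X$ is affine, so I can pick, for each $b \in B$, a regular function $\rho_b$ on $X$ with $\rho_b(b) = 1$ and $\rho_b \equiv 0$ on $B \setminus \{b\}$, and set $\varphi_i \coloneqq \varphi_i^{0} + \sum_{b \in B} \rho_b \theta_i^{(b)}$. Then $\sum_i \varphi_i f_i = \varphi$ off $W$, hence everywhere by continuity; each $\varphi_i$ is regular on $X \setminus B$; and at $b_0 \in B$ the terms $\rho_b \theta_i^{(b)}$ with $b \ne b_0$ are continuous at $b_0$ (regular and vanishing at $b_0$, times a function bounded near $b_0$), while $\varphi_i^{0} + \rho_{b_0} \theta_i^{(b_0)} = \psi_i^{(b_0)} + (\rho_{b_0} - 1)\theta_i^{(b_0)}$ is continuous at $b_0$, since $\psi_i^{(b_0)}$ is and $\rho_{b_0} - 1$ vanishes at $b_0$ against a bounded function. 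So each $\varphi_i$ is continuous rational with $P(\varphi_i) \subseteq B$, and $B$ is finite and contained in $Z(f_1, \ldots, f_r) \cup P(f_1) \cup \cdots \cup P(f_r) \cup P(\varphi)$ --- which is exactly \ref{th-1-6-a} and the closing assertion.

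I expect the only real obstacle to be the second paragraph: converting the existence of a merely continuous solution into the algebraic divisibility $\operatorname{ord}_C \varphi \ge \min_i \operatorname{ord}_C f_i$ along curves $C \subseteq W$, which is what pins the non-regularity of $\varphi_i^{0}$ to a finite set. That is also precisely the step that collapses when $\dim X \ge 3$ --- there the bad locus of $\varphi_i^{0}$ can be positive-dimensional, in line with Hochster's example --- while the passage from there to a global continuous rational solution is soft patching over finitely many points.
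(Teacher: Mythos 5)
Your proposal is correct in substance but takes a genuinely different route from the paper's. For the key finiteness step --- that the candidate solution $\varphi_i^{0} = \varphi f_i/(f_1^2+\cdots+f_r^2)$ is regular off a finite set --- you argue by hand via the order inequality $\operatorname{ord}_C \varphi \ge \min_i \operatorname{ord}_C f_i$ along curves $C$ on which all the $f_i$ vanish, extracting that inequality from the continuous solution supplied by Theorem~\ref{th-1-5}. The paper instead shows (Lemma~\ref{lem-3-8}) that the PT forces the rational functions $R_{\mathbf{c}i}$ to be \emph{locally bounded}, and then quotes Iitaka's theorem that a locally bounded rational function on a nonsingular variety has polar set of codimension at least two (Lemma~\ref{lem-3-6}); your valuation computation is more elementary and makes explicit where $\dim X = 2$ enters, while the paper's isolates a reusable general lemma. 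The patching also differs: the paper produces continuous rational solutions over a finite Zariski open cover $\{X_1^{(p_j)}\}$ and glues them with the weights $\psi_j^{2N}$ of Lemma~\ref{lem-3-2}, whereas you correct $\varphi_i^0$ additively at the finitely many bad points by the bounded rational relations $\theta_i^{(b)}$ (with $\sum_i \theta_i^{(b)} f_i = 0$) cut off by regular functions $\rho_b$; both work. Two details to tighten: (i) your definition of $B$ is mildly circular, since you enlarge $B$ by the polar sets of the $\theta_i^{(b)}$ for $b \in B$ while the sum defining $\varphi_i$ ranges over the enlarged set; this is repaired by observing that every $\theta_i^{(b)}$ has polar set contained in $\bigcup_{j,k}\operatorname{pole}\bigl(f_j f_k/(f_1^2+\cdots+f_r^2)\bigr)$, a finite set independent of $b$ by the same order computation, which can be placed in $B$ once and for all (or by summing only over the points where $\varphi_i^0$ actually fails to be regular). (ii) ``Connected components of $X$ contained in $W$'' should read ``irreducible components on which all the $f_i$ vanish identically,'' and the curves to be treated are all irreducible curves in the Zariski closure of $W$ lying in the polar locus of some $\varphi_i^0$; your argument does cover these, since along any such curve the $f_i$, and hence $\varphi$, must all vanish.
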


As a straightforward consequence we get

\begin{corollary}\label{cor-1-7}
Let $X$ be a nonsingular real algebraic surface and let $f_1, \ldots,
f_r$ be continuous rational functions on $X$. For a continuous rational
function $\varphi$ on $X$, the following conditions are equivalent:
\begin{conditions}
\item\label{cor-1-7-a} The function $\varphi$ can be written in the form
\begin{equation*}
\varphi = \varphi_1 f_1 + \cdots + \varphi_r f_r,
\end{equation*}
where the $\varphi_i$ are continuous rational functions on $X$.

\item\label{cor-1-7-b} The function $\varphi$ can be written in the form
\begin{equation*}
\varphi = \psi_1 f_1 + \cdots + \psi_r f_r,
\end{equation*}
where the $\psi_i$ are continuous functions on $X$.
\end{conditions}
\end{corollary}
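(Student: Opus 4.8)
The plan is to derive the corollary directly from Theorem \ref{th-1-6}, since the two results share the same hypotheses and the content of the corollary is essentially a reformulation. The implication (\ref{cor-1-7-a}) $\Rightarrow$ (\ref{cor-1-7-b}) is trivial: every continuous rational function on $X$ is in particular continuous, so any representation $\varphi = \varphi_1 f_1 + \cdots + \varphi_r f_r$ with continuous rational $\varphi_i$ is already of the form required in (\ref{cor-1-7-b}) with $\psi_i = \varphi_i$. Thus the whole substance lies in the reverse implication.

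For (\ref{cor-1-7-b}) $\Rightarrow$ (\ref{cor-1-7-a}), suppose $\varphi = \psi_1 f_1 + \cdots + \psi_r f_r$ with the $\psi_i$ continuous on $X$. As noted in the paragraph following Definition \ref{def-1-3}, the existence of such a representation by continuous functions forces $\varphi$ to satisfy the PT for the $f_i$: for each $p \in X$ one may simply take $\psi_i^{(p)} = \psi_i$ in condition (\ref{def-1-3-a}), these being continuous on all of $X$ and hence at $p$. Therefore condition (\ref{th-1-6-b}) of Theorem \ref{th-1-6} holds. Since $\varphi$ is assumed to be continuous rational, Theorem \ref{th-1-6} applies and yields condition (\ref{th-1-6-a}): there exist continuous rational functions $\varphi_1, \ldots, \varphi_r$ on $X$ with $\varphi = \varphi_1 f_1 + \cdots + \varphi_r f_r$. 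This is precisely (\ref{cor-1-7-a}), completing the cycle of implications.

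There is essentially no obstacle here — the corollary is a packaging of Theorem \ref{th-1-6} together with the elementary observation that a continuous-function representation implies the PT. The only point requiring a word of care is that Theorem \ref{th-1-6} demands $\varphi$ itself be continuous rational (not merely continuous); this is exactly why the corollary is stated for continuous rational $\varphi$ rather than in greater generality, and it is the reason the corollary does not follow from Theorem \ref{th-1-5} alone. I would present the argument in just a few lines, explicitly invoking the equivalence of (\ref{def-1-3-a}) and (\ref{def-1-3-b}) only to the extent of using (\ref{def-1-3-a}) with globally continuous coefficients.
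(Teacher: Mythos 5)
Your proof is correct and follows exactly the route the paper intends: the paper derives Corollary~\ref{cor-1-7} as an immediate consequence of Theorem~\ref{th-1-6}, using the observation (recorded after Definition~\ref{def-1-3}) that a representation with continuous coefficients implies the PT. Nothing is missing.
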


Corollary~\ref{cor-1-7} cannot be extended to varieties of higher
dimension. A relevant example, involving polynomial functions on $\R^3$,
is provided by Koll\'ar and Nowak \cite[Example~6]{bib9}. Furthermore,
the argument used in \cite[Example~6]{bib9} shows that
Corollary~\ref{cor-1-7} does not hold for the singular real algebraic
surface $S \subset \R^3$ that appears there.

We conclude this section with an example.

\begin{example}\label{ex-1-8}
Consider the functions $f_1(x,y) = x^3$, $f_2(x,y) = y^3$, $\varphi(x,y)
= x^2 y^2$ on $\R^2$. We have
\begin{equation*}
\varphi = \varphi_1 f_1 + \varphi_2 f_2,
\end{equation*}
where $\varphi_1$, $\varphi_2$ are continuous rational functions on
$\R^2$ defined by
\begin{align*}
\varphi_1(x,y) &= \frac{x^5 y^2}{x^6 + y^6} \quad \textrm{for} \ (x,y)
\neq (0,0), \quad \varphi_1(0,0)=0,\\
\varphi_2(x,y) &= \frac{x^2 y^5}{x^6 + y^6} \quad \textrm{for} \ (x,y)
\neq (0,0), \quad \varphi_2(0,0)=0.
\end{align*}
However, $\varphi$ cannot be written as a linear combination of $f_1$
and $f_2$ with coefficients that are regular (or $\C^{\infty}$) functions
on $\R^2$, as can be seen by comparing the Taylor's expansions at
$(0,0)$.
\end{example}

\section{Continuous solutions}\label{sec:2}

We begin with some preliminary results.

\begin{lemma}\label{lem-2-1}
Let $\Omega$ be a metric space and let $f_1, \ldots, f_r, \varphi$ be
continuous functions on $\Omega$ such that the set $Z(f_1, \ldots, f_r)$
is nowhere dense in $\Omega$ and $\varphi$ satisfies the PT for the
$f_i$. Assume that $f_i = gg_i$, where $g$ and the $g_i$ are continuous
functions on $\Omega$. Then there exists a unique continuous function
$\psi$ on $\Omega$ such that $\varphi = g\psi$. Furthermore, $\psi$
satisfies the PT for the $g_i$.
\end{lemma}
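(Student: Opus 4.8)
The plan is to produce $\psi$ pointwise by dividing $\varphi$ by $g$ wherever $g$ does not vanish, and then to show the resulting function extends continuously across $Z(g)$. First I would observe that $Z(g) \subseteq Z(f_1,\dots,f_r)$, since $f_i = g g_i$; hence $Z(g)$ is nowhere dense in $\Omega$, so on the dense open set $\Omega \setminus Z(g)$ the candidate $\psi := \varphi/g$ is well-defined and continuous, and any continuous function on $\Omega$ agreeing with $\varphi/g$ there is unique. So the whole problem reduces to: (i) $\varphi/g$ stays bounded near each point of $Z(g)$ and extends continuously, and (ii) the extension satisfies the PT for the $g_i$. Note also that $\varphi$ vanishes on $Z(f_1,\dots,f_r)$ — this follows from the PT, since at a point $p$ where all $f_i(p)=0$, writing $\varphi = \psi_1^{(p)}f_1 + \cdots + \psi_r^{(p)}f_r$ with the $\psi_i^{(p)}$ continuous at $p$ forces $\varphi(p)=0$ — so in particular $\varphi$ vanishes on $Z(g)$, which is consistent with wanting $g\psi = \varphi$.

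For the continuous extension, fix $p \in Z(g)$ and use the PT in form (\ref{def-1-3-b}): write $\varphi = \varphi^{(p)} + c_1^{(p)} f_1 + \cdots + c_r^{(p)} f_r$ where $A_i^{(p)} = \varphi^{(p)} f_i/(f_1^2 + \cdots + f_r^2)$ extends continuously by $0$ across $Z^{(p)} = Z(f_1,\dots,f_r)\cup\{p\}$. On $\Omega \setminus Z(g)$ we have
\begin{equation*}
\frac{\varphi}{g} = \frac{\varphi^{(p)}}{g} + c_1^{(p)} g_1 + \cdots + c_r^{(p)} g_r,
\end{equation*}
so the only issue is the behaviour of $\varphi^{(p)}/g$. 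Substituting $f_i = g g_i$ into the formula for $A_i^{(p)}$ gives, on $\Omega \setminus Z(f_1,\dots,f_r)$,
\begin{equation*}
A_i^{(p)} = \frac{\varphi^{(p)}\, g g_i}{g^2(g_1^2 + \cdots + g_r^2)} = \frac{1}{g}\cdot\frac{\varphi^{(p)} g_i}{g_1^2 + \cdots + g_r^2},
\end{equation*}
hence $\dfrac{\varphi^{(p)}}{g} = A_i^{(p)} \cdot \dfrac{g_1^2 + \cdots + g_r^2}{g_i}$ wherever $g_i \neq 0$. Since at any point the $g_i$ cannot all vanish once $g$ is near its zero set — more precisely, near a point of $\Omega\setminus Z(f_1,\dots,f_r)$ at least one $g_i$ is bounded away from $0$ — I would patch these local expressions: near a point $q \in \Omega \setminus Z(f_1,\dots,f_r)$ choose $i$ with $g_i(q)\neq 0$ and write $\varphi^{(p)}/g = A_i^{(p)}(g_1^2+\cdots+g_r^2)/g_i$, which is continuous there because $A_i^{(p)}$ is continuous (it extends across $Z^{(p)}\supseteq\{p\}$) and $g_i$ is nonzero. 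This shows $\varphi^{(p)}/g$ extends continuously to all of $\Omega$, with value $0$ at points of $Z(f_1,\dots,f_r)$, and in particular it is continuous at $p$. Combining, $\psi := \varphi/g$ extends continuously to $\Omega$; call the extension $\psi$ still. Then $g\psi = \varphi$ on the dense set $\Omega\setminus Z(g)$, hence everywhere by continuity.

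It remains to check that $\psi$ satisfies the PT for the $g_i$. Fix $p \in \Omega$. Using the decomposition above, set $\psi^{(p)} := \varphi^{(p)}/g$ (the continuous extension just constructed) and keep the same constants $c_i^{(p)}$; then $\psi = \psi^{(p)} + c_1^{(p)} g_1 + \cdots + c_r^{(p)} g_r$, and I must verify that
\begin{equation*}
B_i^{(p)} := \frac{\psi^{(p)} g_i}{g_1^2 + \cdots + g_r^2} \ \text{ on } \ \Omega\setminus(Z(g_1,\dots,g_r)\cup\{p\}), \qquad B_i^{(p)} := 0 \ \text{ on } \ Z(g_1,\dots,g_r)\cup\{p\},
\end{equation*}
is continuous at $p$. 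The key identity is that on $\Omega\setminus Z(f_1,\dots,f_r)$,
\begin{equation*}
B_i^{(p)} = \frac{(\varphi^{(p)}/g)\, g_i}{g_1^2+\cdots+g_r^2} = \frac{\varphi^{(p)} g_i}{g(g_1^2+\cdots+g_r^2)} = \frac{\varphi^{(p)} (g g_i)}{g^2(g_1^2+\cdots+g_r^2)} = \frac{\varphi^{(p)} f_i}{f_1^2+\cdots+f_r^2} = A_i^{(p)},
\end{equation*}
so $B_i^{(p)}$ and $A_i^{(p)}$ agree off $Z(f_1,\dots,f_r)$. The main obstacle — and the step requiring the most care — is reconciling the values on the ``bad'' sets: $A_i^{(p)}$ is defined to be $0$ on $Z(f_1,\dots,f_r)\cup\{p\}$, whereas $B_i^{(p)}$ is defined to be $0$ on the possibly larger set $Z(g_1,\dots,g_r)\cup\{p\}$ and is given by the quotient formula on $Z(f_1,\dots,f_r)\setminus Z(g_1,\dots,g_r)$ (where $g$ vanishes but not all $g_i$). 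On that last set, however, $\varphi^{(p)}/g$ has already been shown to extend continuously with value $0$ on $Z(f_1,\dots,f_r)$, so the quotient formula for $B_i^{(p)}$ also yields $0$ there; thus in fact $B_i^{(p)} = A_i^{(p)}$ everywhere on $\Omega$ except possibly we must double-check agreement on $Z(g_1,\dots,g_r)\setminus(Z(f_1,\dots,f_r)\cup\{p\})$, but that set is empty since $g_i = f_i/g$ forces $Z(g_1,\dots,g_r)\cap(\Omega\setminus Z(g)) \subseteq Z(f_1,\dots,f_r)$ and $Z(g)\subseteq Z(f_1,\dots,f_r)$. Hence $B_i^{(p)} = A_i^{(p)}$ on all of $\Omega$, so $B_i^{(p)}$ is continuous at $p$ because $A_i^{(p)}$ is; this is exactly condition (\ref{def-1-3-b}) for $\psi$ and the $g_i$ at $p$, completing the proof.
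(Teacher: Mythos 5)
Your overall strategy (divide by $g$ off $Z(g)$, extend by continuity, verify the PT for the $g_i$ via the identity between $B_i^{(p)}$ and $A_i^{(p)}$) is the same as the paper's, and several of your reductions are correct: $Z(g)\subset Z(f_1,\dots,f_r)$ is nowhere dense, uniqueness follows from density, and the algebraic identity $B_i^{(p)}=A_i^{(p)}$ does hold on $\Omega\setminus Z(f_1,\dots,f_r)$. The gap is the step where you assert that $A_i^{(p)}$ ``is continuous (it extends across $Z^{(p)}$)'' and deduce that $\varphi^{(p)}/g$ extends continuously by $0$ on all of $Z(f_1,\dots,f_r)$ and that $B_i^{(p)}=A_i^{(p)}$ on all of $\Omega$. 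The pointwise test only guarantees that $A_i^{(p)}$ is continuous \emph{at the single point} $p$; it can be discontinuous at other points of $Z^{(p)}$, and the pointwise equality $B_i^{(p)}=A_i^{(p)}$ genuinely fails on $Z(f_1,\dots,f_r)\setminus(Z(g_1,\dots,g_r)\cup\{p\})$. Concretely, take $\Omega=\R^2$, $g=x$, $g_1=1$, $g_2=y$, so $f_1=x$, $f_2=xy$, and $\varphi=x$, $p=(0,0)$. A valid choice in condition (b) of the PT at $p$ is $c_1^{(p)}=c_2^{(p)}=1$, $\varphi^{(p)}=-xy$; then $\varphi^{(p)}/g=-y$ extends continuously but does not vanish on $Z(f_1,f_2)=\{x=0\}$, the function $A_1^{(p)}$ equals $-y/(1+y^2)$ off $\{x=0\}$ and $0$ on it (so it is discontinuous at every $(0,b)$ with $b\neq 0$), and $B_1^{(p)}=-y/(1+y^2)$ differs from $A_1^{(p)}$ on $\{x=0\}\setminus\{p\}$. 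Both happen to be continuous at $p$ here, but your argument for that is broken. A telling symptom is that your verification of the PT for $\psi$ never uses the hypothesis that $Z(f_1,\dots,f_r)$ is nowhere dense.

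That hypothesis is exactly what repairs the step, and it is how the paper argues: once $\psi$ is constructed, $\psi^{(p)}=\psi-\sum_j c_j^{(p)}g_j$ is continuous on $\Omega$, so $\psi^{(p)}g_i/(g_1^2+\cdots+g_r^2)$ is continuous on $\Omega\setminus Z(g_1,\dots,g_r)$; it agrees with $A_i^{(p)}$ on the dense subset $\Omega\setminus Z(f_1,\dots,f_r)$, and therefore inherits the limit $0$ at $p$ from the continuity of $A_i^{(p)}$ at $p$ by a density argument --- no equality on the exceptional set is needed or true. The same issue affects your construction of $\psi$: your patching only establishes continuity of $\varphi^{(p)}/g$ at points of $\Omega\setminus Z(f_1,\dots,f_r)$, where $g\neq0$ and continuity is trivial anyway, and your single-index formula $A_i^{(p)}(g_1^2+\cdots+g_r^2)/g_i$ does not control the limit at $p$ when $g_i(p)=0$. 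The paper avoids all of this by using condition (a) of the PT for existence: $\varphi=\sum_i\psi_i^{(p)}f_i=g\sum_i\psi_i^{(p)}g_i$, so $\varphi/g=\sum_i\psi_i^{(p)}g_i$ on $\Omega\setminus Z(g)$, and the right-hand side is visibly continuous at $p$. (Your version can also be salvaged by summing: $\varphi^{(p)}/g=\sum_i A_i^{(p)}g_i$ off $Z(f_1,\dots,f_r)$, which tends to $0$ at $p$, combined with $\varphi^{(p)}=0$ on $Z(f_1,\dots,f_r)\setminus Z(g)$.)
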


\begin{proof}
Note that the set $Z(g)$ is nowhere dense in $\Omega$. To prove
existence of $\psi$ (uniqueness is then automatic) it suffices to show
that for every point $p \in \Omega$ the limit
\begin{equation*}
\lim_{x\to p} \frac{\varphi(x)}{g(x)},\quad \textrm{where}\ x\in \Omega
\setminus Z(g),
\end{equation*}
exists. This readily follows since $\varphi$ can be written as
\begin{equation*}
\varphi = \psi_1^{(p)} f_1 + \cdots + \psi_r^{(p)} f_r = g(\psi^{(p)}g_1
+ \cdots + \psi_r^{(p)}g_r),
\end{equation*}
where the $\psi_i^{(p)}$ are functions on $\Omega$ that are continuous
at $p$.

It remains to prove that $\psi$ satisfies the PT for the $g_i$. We set
$Z^{(p)} \coloneqq Z(f_1, \ldots, f_r) \cup \{p\}$ and write $\varphi$
in the form
\begin{equation*}
\varphi = \varphi^{(p)} + c_1^{(p)}f_1 + \cdots + c_r^{(p)}f_r =
\varphi^{(p)} + g(c_1^{(p)}g_1 + \cdots + c_r^{(p)}g_r),
\end{equation*}
where $c_i^{(p)} \in \R$ and the functions $A_i^{(p)}$, defined by
\begin{equation*}
A_i^{(p)} = \frac{\varphi^{(p)}f_i}{f_1^2 + \cdots + f_r^2} \quad
\textrm{on}\ \Omega\setminus Z^{(p)}\quad \textrm{and}\quad A_i^{(p)}=0 \quad
\textrm{on}\ Z^{(p)},
\end{equation*}
are continuous at $p$. Defining $\psi^{(p)}$ by
\begin{equation*}
\psi = \psi^{(p)} + c_1^{(p)}g_1 + \cdots + c_r^{(p)}g_r,
\end{equation*}
we get $\varphi^{(p)} = g\psi^{(p)}$. Consequently,
\begin{equation*}
\frac{\varphi^{(p)}f_i}{f_1^2 + \cdots + f_r^2} =
\frac{\psi^{(p)}g_i}{g_1^2 + \cdots + g_r^2} \quad \textrm{on} \ \Omega
\setminus Z(f_1, \ldots, f_r).
\end{equation*}
Since the set $Z(f_1, \ldots f_r)$ is nowhere dense in $\Omega$, it
follows that $\psi$ satisfies the PT for the~$g_i$.
\end{proof}

\begin{lemma}\label{lem-2-2}
Let $X$ be an irreducible nonsingular real algebraic variety and let
$f_1, \ldots, f_r$ be continuous rational functions on $X$, not all
identically equal to $0$. Then the Zariski closure of $Z(f_1, \ldots,
f_r)$ is Zariski nowhere dense in $X$. In particular, $Z(f_1, \ldots,
f_r)$ is Euclidean nowhere dense in $X$.
\end{lemma}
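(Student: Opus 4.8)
The plan is to reduce the statement to a question about a single continuous rational function, and then exploit the fact that a continuous rational function on a nonsingular variety is regular in codimension one. Since not all $f_i$ vanish identically, the function $f \coloneqq f_1^2 + \cdots + f_r^2$ is a continuous rational function on $X$ that is not identically zero, and clearly $Z(f_1, \ldots, f_r) = Z(f)$. Thus it suffices to prove the claim for a single nonzero continuous rational function $f$: the Zariski closure of $Z(f)$ is Zariski nowhere dense in $X$. Since $X$ is irreducible, this amounts to showing that $Z(f)$ is not Zariski dense in $X$, i.e. that $f$ does not vanish on a nonempty Zariski open subset of $X$.

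The key input is that $f$ is regular on $X \setminus P(f)$, where $P(f)$ is a Zariski closed subset of $X$. Because $X$ is irreducible and nonsingular, hence a variety whose local rings are regular (in particular integrally closed, even factorial), the set $P(f)$ has codimension at least two in $X$; this is the standard fact that a continuous rational function on a nonsingular variety has polar set of codimension $\geq 2$ — on nonsingular varieties such functions are precisely the regulous functions of Fichou--Huisman--Mangolte--Monnier. (If one prefers to avoid invoking this, one can argue directly: on an open subset of $X$ of dimension $2$ where $X$ is a smooth surface, a continuous rational function whose polar locus had codimension one would, along a generic curve in that locus, fail to extend continuously, contradicting continuity.) In particular $X \setminus P(f)$ is a nonempty Zariski open subset of the irreducible variety $X$, hence Zariski dense and irreducible, and on it $f$ is a regular function.

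Now on the irreducible affine variety $U \coloneqq X \setminus P(f)$ the function $f$ is regular and, since $f$ is not identically zero on $X$ and $U$ is dense, $f$ is not identically zero on $U$. Therefore $Z(f) \cap U$ is a proper Zariski closed subset of the irreducible variety $U$, so it is Zariski nowhere dense in $U$; consequently its Zariski closure in $X$ is a proper Zariski closed subset of $X$. Since $Z(f) \subseteq (Z(f) \cap U) \cup P(f)$ and $P(f)$ is a proper Zariski closed subset of $X$, the Zariski closure of $Z(f)$ is contained in the union of two proper Zariski closed subsets of the irreducible variety $X$, hence is itself a proper Zariski closed subset, i.e. Zariski nowhere dense in $X$. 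Finally, any proper Zariski closed subset of a nonsingular (hence locally of pure dimension) real algebraic variety has empty Euclidean interior, so $Z(f_1,\ldots,f_r) = Z(f)$ is Euclidean nowhere dense in $X$ as well.

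The main obstacle is the codimension estimate $\operatorname{codim} P(f) \geq 2$: everything else is soft point-set and irreducibility bookkeeping, but this step genuinely uses nonsingularity of $X$ (it fails on singular varieties, as Example 1.4 already hints at the surface case) and is the place where one must either cite the theory of regulous/continuous rational functions on nonsingular varieties or supply the local normality argument sketched above.
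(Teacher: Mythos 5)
Your argument is correct and, at its core, is the same as the paper's: reduce to the single function $f = f_1^2 + \cdots + f_r^2$ and use the inclusion $Z(f) \subset \bigl(Z(f) \cap (X \setminus P(f))\bigr) \cup P(f)$, each piece of which has Zariski-nowhere-dense closure. However, the step you single out as ``the key input'' and ``the main obstacle'' --- that $P(f)$ has codimension at least $2$ --- is a red herring. All the argument needs is that $P(f)$ is a \emph{proper} Zariski closed subset of the irreducible variety $X$, and that is immediate from the paper's definition of a continuous rational function ($f$ is regular on some Zariski open \emph{dense} subset, so $P(f)$ is contained in the complement of such a set). The codimension-$\geq 2$ statement is a genuinely nontrivial theorem (and your parenthetical sketch of it is both hand-wavy and restricted to surfaces, whereas the lemma concerns arbitrary nonsingular varieties), so importing it here both overstates the depth of the lemma and introduces an unproved dependency that the paper deliberately avoids. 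Nonsingularity enters only at the very end, to pass from ``Zariski nowhere dense'' to ``Euclidean nowhere dense,'' exactly as you use it in your last sentence. With the codimension claim replaced by the trivial observation that $P(f) \neq X$, your proof is complete and matches the paper's.
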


\begin{proof}
Setting $f = f_1^2 + \cdots + f_r^2$, we get $Z(f)=Z(f_1, \ldots, f_r)$.
The function $f$ is continuous rational and satisfies
\begin{equation*}
Z(f)\subset Z(f|_{X \setminus P(f)}) \cup P(f),
\end{equation*}
which implies both assertions.
\end{proof}

\begin{lemma}\label{lem-2-3}
Let $X$ be an irreducible nonsingular real algebraic surface and let
$f_1, \ldots, f_r$ be continuous rational functions on $X$. Then, for
every point $p \in X$, there exists a Zariski open neighborhood
$X^{(p)} \subset X$ of $p$ and there exist regular functions $g_1,
\ldots, g_r, g, h$ on $X^{(p)}$ such that $Z(h) \neq X^{(p)}$, $Z(g_1,
\ldots, g_r) \subset \{p\}$, and $hf_i = gg_i$ on $X^{(p)}$ for
$i=1,\ldots, r$.
\end{lemma}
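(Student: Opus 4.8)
The plan is to work entirely in the local ring $\mathcal{O} \coloneqq \mathcal{O}_{X,p}$ of regular functions at $p$. The degenerate case $f_1 \equiv \cdots \equiv f_r \equiv 0$ is trivial (take $h = 1$, $g = 0$, $g_1 = \cdots = g_r = 1$), so I assume that some $f_{i_0} \not\equiv 0$. Since $X$ is a nonsingular surface, $\mathcal{O}$ is a two\mhyphen dimensional regular local ring, hence a unique factorization domain; and since $X$ is irreducible, the fraction field of $\mathcal{O}$ is the function field $\R(X)$. Thus each rational function $f_i$ is an element of $\operatorname{Frac}(\mathcal{O})$, and the whole construction is bookkeeping in the factorial ring $\mathcal{O}$.

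First I would clear denominators. Writing $f_i = P_i/Q_i$ with $P_i, Q_i \in \mathcal{O}$ and $Q_i \ne 0$, and setting $h \coloneqq Q_1 \cdots Q_r \in \mathcal{O} \setminus \{0\}$, one gets $\tilde{g}_i \coloneqq h f_i = \big(\prod_{j \ne i} Q_j\big) P_i \in \mathcal{O}$ for each $i$, with $\tilde{g}_{i_0} \ne 0$. Next I would divide out the common factor: let $g \in \mathcal{O}$ be a greatest common divisor of $\tilde{g}_1, \ldots, \tilde{g}_r$ (so $g \ne 0$) and write $\tilde{g}_i = g\, g_i$ with $g_i \in \mathcal{O}$; then $\gcd(g_1, \ldots, g_r) = 1$ in $\mathcal{O}$ and $h f_i = g\, g_i$ as elements of $\R(X)$. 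Now choose a Zariski open neighborhood $X^{(p)} \subset X$ of $p$ on which $h, g, g_1, \ldots, g_r$ are all regular. As $X$ is irreducible, so is $X^{(p)}$, whence $Z(h) \ne X^{(p)}$ because $h \ne 0$.

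Two requirements remain, after possibly shrinking $X^{(p)}$. For the identity $h f_i = g\, g_i$ \emph{as functions} on $X^{(p)}$: both sides are continuous on $X^{(p)}$ (using that $f_i$ is continuous and $h, g, g_i$ are regular, hence continuous), and they agree on the Zariski dense open subset $X^{(p)} \setminus P(f_i)$ on which $f_i$ is regular, hence they agree on all of $X^{(p)}$. For the inclusion $Z(g_1, \ldots, g_r) \subset \{p\}$: since $\gcd(g_1, \ldots, g_r) = 1$, the ideal $(g_1, \ldots, g_r)\mathcal{O}$ is contained in no height\mhyphen one prime of $\mathcal{O}$ (such a prime, being principal in the factorial ring $\mathcal{O}$, would be generated by a common prime factor of the $g_i$); as $\dim \mathcal{O} = 2$ and some $g_i \ne 0$, the only prime of $\mathcal{O}$ containing $(g_1, \ldots, g_r)\mathcal{O}$ is the maximal ideal $\mathfrak{m}_p$, so $\mathfrak{m}_p^N \subset (g_1, \ldots, g_r)\mathcal{O}$ for some $N$. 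Writing a regular system of parameters $x_1, x_2$ at $p$ through this inclusion shows that $x_1$ and $x_2$ vanish on $Z(g_1, \ldots, g_r)$ near $p$, which forces $Z(g_1, \ldots, g_r) \subset \{p\}$ on a small enough $X^{(p)}$.

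The step I expect to be the main obstacle is this last one: converting the purely algebraic fact ``$\gcd(g_1, \ldots, g_r) = 1$ in $\mathcal{O}_{X,p}$'' into the geometric statement that the common zero set of $g_1, \ldots, g_r$ reduces to $\{p\}$ near $p$. This is exactly where nonsingularity of $X$ enters (making $\mathcal{O}_{X,p}$ a regular, hence factorial, local ring in which height\mhyphen one primes are principal) together with the surface hypothesis $\dim X = 2$ (so that the only primes to consider are the height\mhyphen one ``curve'' primes and the maximal ideal), in keeping with the fact that the companion results fail in higher dimension.
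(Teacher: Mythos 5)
Your proof is correct and follows essentially the same route as the paper's: write the $f_i$ with a common regular denominator, factor out a greatest common divisor in the local ring at $p$ (a UFD by nonsingularity), and use coprimality of the resulting $g_i$ together with $\dim X = 2$ to force $Z(g_1,\ldots,g_r)\subset\{p\}$ near $p$. The only difference is that you spell out the last step (no height-one prime contains $(g_1,\ldots,g_r)$, hence $\mathfrak{m}_p^N\subset(g_1,\ldots,g_r)$), which the paper leaves implicit behind the word ``Consequently.''
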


\begin{proof}
We can find regular functions $\lambda_1, \ldots, \lambda_r, \mu$ on $X$
such that $Z(\mu) \neq X$ and $f_i = \lambda_i / \mu$ on $X \setminus
Z(\mu)$ for $i=1,\ldots,r$. Since $X$ is nonsingular, the local ring of regular functions
 at each point $p\in X$ is a unique factorization domain.
Consequently, there exists a Zariski open neighborhood $X^{(p)} \subset
X$ of $p$ and there exist regular functions $g_1, \ldots, g_r, g$ on
$X^{(p)}$ such that $\lambda_i = gg_i$ on $X^{(p)}$ and $Z(g_1, \ldots,
g_r) \subset \{p\}$. To complete the proof it suffices to set $h
\coloneqq \mu|_{X^{(p)}}$.
\end{proof}

\begin{lemma}\label{lem-2-4}
Let $X$ be an irreducible nonsingular real algebraic surface and let
$f_1, \ldots, f_r$ be continuous rational functions on $X$, not all
identically equal to $0$. Let $\varphi$ be a continuous function on $X$
that satisfies the PT for the $f_i$. Then, for every point $p \in X$,
there exists a~Zariski open neighborhood $X^{(p)} \subset X$ of $p$ and
there exist continuous functions $\alpha_1^{(p)}, \ldots, \alpha_r^{(p)}$
on $X^{(p)}$ and real numbers $c_1^{(p)}, \ldots, c_r^{(p)}$ such that
\begin{gather*}
\varphi = \alpha_1^{(p)}f_1 + \cdots + \alpha_r^{(p)}f_r \quad \textrm{on} \
X^{(p)},\quad \textrm{and}\\
\alpha_i^{(p)} = c_i^{(p)} + \frac{(\varphi - (c_1^{(p)}f_1 + \cdots +
c_r^{(p)}f_r))f_i}{f_1^2 + \ldots + f_r^2} \quad \textrm{on} \ X^{(p)} \setminus
Z(f_1, \ldots, f_r)
\end{gather*}
for $i=1,\ldots,r$.
\end{lemma}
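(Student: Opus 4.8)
The plan is to localize the pointwise test at the given point $p$ using Definition~\ref{def-1-3}\ref{def-1-3-b}, and then extend the resulting representation from a germ at $p$ to a Zariski open neighborhood. First I would apply the PT at $p$: since $\varphi$ satisfies the PT for the $f_i$, there exist real numbers $c_1^{(p)}, \ldots, c_r^{(p)}$ and a continuous function $\varphi^{(p)}$ on $X$ with $\varphi = \varphi^{(p)} + c_1^{(p)} f_1 + \cdots + c_r^{(p)} f_r$, such that the functions $A_i^{(p)}$ (given by $\varphi^{(p)} f_i / (f_1^2 + \cdots + f_r^2)$ off $Z^{(p)}$, and $0$ on $Z^{(p)}$) are continuous at $p$. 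The natural candidate for $\alpha_i^{(p)}$ is $c_i^{(p)} + A_i^{(p)}$, which by the displayed formula in the lemma is exactly what is required on $X^{(p)} \setminus Z(f_1,\ldots,f_r)$. The issue is that $A_i^{(p)}$ is a priori only continuous \emph{at} $p$, not on a whole neighborhood, so I must shrink to a set where it is genuinely continuous everywhere.

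The key observation is that away from $Z(f_1,\ldots,f_r)$, the function $A_i^{(p)} = \varphi^{(p)} f_i / (f_1^2+\cdots+f_r^2)$ is automatically continuous, being a quotient of continuous functions with nonvanishing denominator; the only possible points of discontinuity lie in $Z(f_1,\ldots,f_r)$. By Lemma~\ref{lem-2-2}, $Z(f_1,\ldots,f_r)$ has Zariski closure $Z$ that is Zariski nowhere dense in $X$ (here irreducibility and the hypothesis that not all $f_i$ vanish identically are used), so $Z$ is a finite set together with finitely many irreducible curves. I then take $X^{(p)}$ to be the complement in $X$ of all the irreducible components of $Z$ not passing through $p$, together with the removal of all points of $Z \setminus (\text{curves through } p)$ other than $p$ — more simply, $X^{(p)} := X \setminus (Z')$ where $Z'$ is the union of those Zariski closed pieces of $Z$ (curves and points) that do not contain $p$. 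This is a Zariski open neighborhood of $p$, and on it the only points of $Z(f_1,\ldots,f_r)$ that could cause trouble are $p$ itself together with points lying on curves of $Z$ through $p$.

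To handle those remaining points I would use the continuity of $\varphi$ and of each $f_i$ more carefully: on $X^{(p)} \setminus Z(f_1,\ldots,f_r)$ set $\alpha_i^{(p)} := c_i^{(p)} + A_i^{(p)}$, and on $X^{(p)} \cap Z(f_1,\ldots,f_r)$ set $\alpha_i^{(p)} := c_i^{(p)}$. Then $\varphi = \alpha_1^{(p)} f_1 + \cdots + \alpha_r^{(p)} f_r$ holds identically on $X^{(p)}$ (on $Z(f_1,\ldots,f_r)$ both sides equal $\varphi$, using $\varphi^{(p)}=0$ there, which follows from $\varphi^{(p)}$ being continuous at such points together with the PT structure; in fact $\varphi$ itself vanishes on $Z(f_1,\ldots,f_r)$ as a necessary consequence of the PT). The continuity of $\alpha_i^{(p)}$ at $p$ is given; at any other point $q \in X^{(p)} \cap Z(f_1,\ldots,f_r)$, I would invoke the same PT applied at $q$: continuity of $A_i^{(q)}$ at $q$ forces $\varphi^{(q)}$ to vanish to sufficient order, and comparing the two representations shows $A_i^{(p)}$ extends continuously across $q$ as well. \textbf{The main obstacle} is precisely this last point: upgrading "continuous at $p$" to "continuous on a Zariski neighborhood of $p$", which requires that the potential bad locus $Z(f_1,\ldots,f_r)$ be thin enough — and this is where Lemma~\ref{lem-2-2} (reducing the Zariski closure of the common zero set to a Zariski nowhere dense set) does the essential work, letting us discard all bad curves not through $p$ and argue pointwise along the few that remain.
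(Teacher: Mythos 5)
There is a genuine gap, and it sits exactly at the point you flag as ``the main obstacle'': your mechanism for upgrading continuity of $A_i^{(p)}$ at $p$ to continuity at the other points $q$ of $Z(f_1,\ldots,f_r)\cap X^{(p)}$ (those lying on components of the Zariski closure of $Z$ through $p$, which your choice of $X^{(p)}$ does \emph{not} remove) does not work. Applying the PT at such a $q$ produces constants $c_i^{(q)}$ that need not equal your $c_i^{(p)}$, and the comparison of the two representations gives
\begin{equation*}
A_i^{(p)} - A_i^{(q)} \;=\; \sum_{j=1}^{r}\bigl(c_j^{(q)}-c_j^{(p)}\bigr)\,\frac{f_i f_j}{f_1^2+\cdots+f_r^2}
\end{equation*}
on the complement of $Z$. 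The cross-terms $f_if_j/(f_1^2+\cdots+f_r^2)$ are bounded by $1/2$ but in general have \emph{no limit} at points of $Z$ (this is precisely the phenomenon of Example~\ref{ex-3-5}: $xy/(x^2+y^2)$ is bounded on $\R^2$ but does not extend continuously to the origin). So continuity of $A_i^{(q)}$ at $q$ tells you nothing about continuity of $A_i^{(p)}$ at $q$, and nothing in your argument forces the constants at different points to agree. A quick sanity check confirms something must be wrong: your argument nowhere uses that $X$ is a surface (Lemma~\ref{lem-2-2} and your construction of $X^{(p)}$ are valid in any dimension), so if it were correct it would prove Theorem~\ref{th-1-5} on $\R^3$, contradicting Hochster's example cited right after that theorem.

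The paper's proof avoids this issue by an idea absent from your proposal: local factorization. Lemma~\ref{lem-2-3} uses that the local ring of a nonsingular \emph{surface} is a two-dimensional UFD to write $hf_i=gg_i$ on a Zariski neighborhood of $p$ with $Z(g_1,\ldots,g_r)\subset\{p\}$ (after dividing out the gcd, the common zero set has codimension $2$, hence is finite, and the finitely many extra points are discarded by shrinking the Zariski neighborhood). Lemma~\ref{lem-2-1} then divides $g$ out of $h\varphi$ to get $\psi$ satisfying the PT for the $g_i$, and the functions $B_i^{(p)}=\psi^{(p)}g_i/(g_1^2+\cdots+g_r^2)$ are automatically continuous away from $p$ (nonvanishing denominator) and continuous at $p$ by the PT applied at that single point. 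The constants $c_i^{(p)}$ of the conclusion are the ones coming from this PT for $\psi$, not from a direct application of the PT to $\varphi$ at $p$. Your proof would need to be rebuilt around this reduction of the bad locus to the single point $p$; discarding only the components of $\overline{Z}$ not through $p$ is not enough.
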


\begin{proof}
By Lemma~\ref{lem-2-3}, we can find a Zariski open neighborhood $X^{(p)}
\subset X$ of $p$ and regular functions $g_1, \ldots, g_r, g, h$ on
$X^{(p)}$ such that
\begin{equation}\label{eq-2-4-1}
Z(g_1,\ldots,g_r) \subset \{p\},
\end{equation}
\begin{equation}\label{eq-2-4-2}
hf_i = gg_i \quad \textrm{on} \ X^{(p)} \quad \textrm{for}\ i=1, \ldots, r,
\end{equation}
and $Z(h) \neq X^{(p)}$. Since $\varphi$ satisfies the PT for the $f_i$,
it follows that $h\varphi|_{X^{(p)}}$ satisfies the PT for the
$hf_i|_{X^{(p)}} = gg_i$. According to Lemma~\ref{lem-2-2}, the set
\begin{equation*}
Z(hf_1|_{X^{(p)}}, \ldots, hf_r|_{X^{(p)}}) = Z(gg_1, \ldots, gg_r)
\end{equation*}
is nowhere dense in $X^{(p)}$. Hence, in view of Lemma~\ref{lem-2-1},
there exists a unique continuous function $\psi$ on $X^{(p)}$ such that
\begin{equation}\label{eq-2-4-3}
h\varphi|_{X^{(p)}} = g\psi.
\end{equation}
Furthermore, $\psi$ satisfies the PT for the $g_i$. Consequently, taking
\eqref{eq-2-4-1} into account, we can write $\psi$ in the form
\begin{equation}\label{eq-2-4-4}
\psi = \psi^{(p)} + c_1^{(p)}g_1 + \cdots + c_r^{(p)}g_r,
\end{equation}
where $c_i^{(p)} \in \R$ and the functions $B_i^{(p)}$ on $X^{(p)}$,
defined by
\begin{equation}\label{eq-2-4-5}
B_i^{(p)} = \frac{\psi^{(p)}g_i}{g_1^2 + \cdots + g_r^2} \quad \textrm{on}\
X^{(p)} \setminus \{p\}\quad \textrm{and} \quad B_i^{(p)}(p) = 0,
\end{equation}
are continuous at $p$. It follows that the $B_i^{(p)}$ are continuous on
$X^{(p)}$.

Defining $\varphi^{(p)}$ by
\begin{equation}\label{eq-2-4-6}
\varphi = \varphi^{(p)} + c_1^{(p)}f_1 + \cdots + c_r^{(p)}f_r
\end{equation}
and making use of \eqref{eq-2-4-2}--\eqref{eq-2-4-6}, we get
\begin{equation*}
B_i^{(p)} = \frac{\varphi^{(p)}f_i}{f_1^2 + \cdots + f_r^2}\quad \textrm{on}
\ X^{(p)} \setminus (\{p\}\cup Z(g)).
\end{equation*}
By continuity,
\begin{equation}\label{eq-2-4-7}
B_i^{(p)} = \frac{\varphi^{(p)}f_i}{f_1^2 + \cdots + f_r^2}\quad \textrm{on}
\ X^{(p)} \setminus Z(f_1, \ldots, f_r).
\end{equation}
The functions $\alpha_i^{(p)} \coloneqq c_i^{(p)} + B_i^{(p)}$ are
continuous on $X^{(p)}$ and in view of \eqref{eq-2-4-6}, \eqref{eq-2-4-7}
they satisfy
\begin{equation*}
\varphi = \alpha_1^{(p)}f_1 + \cdots + \alpha_r^{(p)}f_r \quad \textrm{on} \
X^{(p)} \setminus Z(f_1, \ldots, f_r).
\end{equation*}
By continuity, the last equality holds on $X^{(p)}$. The proof is
complete.
\end{proof}

\begin{proof}[Proof of Theorem~\ref{th-1-5}]
By Lemma~\ref{lem-2-4}, a partition of unity argument completes the
proof.
\end{proof}

Lemma~\ref{lem-2-4} contains more information than we needed for the
proof of Theorem~\ref{th-1-5}. However, the full statement will be used to
prove Theorem~\ref{th-1-6} in Section~\ref{sec:3}.

\section{Continuous rational solutions}\label{sec:3}

We will frequently use, not necessarily explicitly referring to it, the
following fact: If $X$ is a~nonsingular real algebraic variety, $X^0
\subset X$ a Zariski open subset, and $U \subset X$ a Euclidean open
subset, then $X^0 \cap U$ is Euclidean dense in $U$.

\begin{lemma}\label{lem-3-1}
Let $X$ be a nonsingular real algebraic variety, $\psi \colon X \to \R$
a regular function, and $f \colon X \setminus Z(\psi) \to \R$ a
continuous rational function. Then there exists an integer $N_0 > 0$
such that for every integer $N \geq N_0$, the function $\psi^Nf$,
extended by $0$ on $Z(\psi)$, is continuous rational on $X$.
\end{lemma}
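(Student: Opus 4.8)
The plan is to reduce the problem to a statement about the zero set $Z(\psi)$ and the indeterminacy locus of $f$, and then exploit the fact that $f$ is continuous on $X\setminus Z(\psi)$ and regular off a Zariski closed set there. Write $Z\coloneqq Z(\psi)$. First I would observe that since $f$ is continuous rational on the nonsingular variety $X\setminus Z$, there is a smallest Zariski closed subset $P\coloneqq P(f)\subset X\setminus Z$ outside of which $f$ is regular; moreover $f$ is locally bounded near every point of $X\setminus Z$. The only obstacle to extending $\psi^Nf$ (by $0$) to a \emph{continuous} function on $X$ is the behaviour near points of $Z$, and the only obstacle to it being \emph{regular on a Zariski dense open set} is controlled by $Z\cup \overline{P}$ (Zariski closure taken in $X$), which is Zariski nowhere dense.

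Next, for the regularity part, I would work locally. Near a point $p\in X$ the function $f$, being continuous rational on $X\setminus Z$, can be written as a quotient $a/b$ of regular functions on a Zariski open neighbourhood $X^{(p)}$ with $Z(b)\neq X^{(p)}$; shrinking if necessary and using that the local ring is a UFD (as in Lemma~\ref{lem-2-3}), I can arrange that $b$ divides a power of $\psi$ locally, or more directly that $\psi^{N}a/b$ is regular on $X^{(p)}$ for $N$ large — the point being that on the nonsingular surface the divisor of $b$ along which $f$ blows up is supported, after the identity $\psi^Nf$, inside the divisor of $\psi$. Doing this at finitely many points covering a suitable affine chart and using quasi-compactness of the Zariski topology yields a single $N_1$ so that $\psi^{N_1}f$ is regular on $X\setminus Z$, and since that set is Zariski dense, it suffices to check continuity at points of $Z$.

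For continuity along $Z$, the key estimate is a \L{}ojasiewicz-type inequality: $f$ is a continuous rational function on $X\setminus Z$ regular off $P$, so near any point of $X\setminus Z$ it is bounded, but as one approaches $Z$ it may blow up at most polynomially in $1/|\psi|$. Concretely, writing $f=a/b$ with $a,b$ regular near $p\in Z$ and $Z(b)\subset\{\psi=0\}$ locally (after the reduction above, or by clearing denominators globally as in Lemma~\ref{lem-2-3}), one gets $|b|\geq c|\psi|^{m}$ on a neighbourhood of $p$ in $X\setminus Z$ by the \L{}ojasiewicz inequality for regular functions with $Z(\psi)\subset Z(b)$; hence $|\psi^{N}f|\leq C|\psi|^{N-m}\to 0$ as $x\to p$. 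Choosing $N_0$ to exceed both $N_1$ and the (uniform, by quasi-compactness) exponent $m$ arising from the \L{}ojasiewicz estimates over a finite cover shows that for all $N\geq N_0$ the extension of $\psi^Nf$ by $0$ on $Z$ is continuous on $X$ and regular on the Zariski dense open set $X\setminus Z$, i.e.\ continuous rational.

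The main obstacle I expect is the \emph{uniformity} of the exponent: a priori the local \L{}ojasiewicz exponent $m$ (equivalently, the order of the pole of $f$ along components of $Z$) could vary from point to point, so one must argue that finitely many local choices suffice. This is handled by the quasi-compactness of the Zariski topology together with the fact that $X$ is covered by finitely many affine open pieces on each of which the denominator $b$ of $f$ can be taken globally regular; alternatively, one can take a single global representation $f=a/b$ on $X\setminus Z$ with $a,b$ regular on $X$ and $Z(b)\subset Z$, reducing everything to one \L{}ojasiewicz inequality $|b|\geq c\,|\psi|^{m}$ valid near the compact locus in question. With $N_0\coloneqq m+1$ (or the max of this with the exponent needed for regularity), the conclusion follows.
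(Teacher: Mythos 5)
Your overall strategy for the heart of the lemma --- a \L{}ojasiewicz bound $\abs{f}\le C\abs{\psi}^{-m}$ near $Z(\psi)$, then take $N_0>m$ --- is the same as the paper's, but the way you try to obtain that bound has a genuine gap. You derive it from a local representation $f=a/b$ with $a,b$ regular, via $\abs{b}\ge c\abs{\psi}^{m}$, which requires $Z(b)\subset Z(\psi)$ on the neighbourhood in question (you actually write the inclusion in both directions; the one you need is $Z(b)\subset Z(\psi)$). This inclusion cannot be arranged in general: any denominator of a continuous rational function must also vanish on the set $P(f)\subset X\setminus Z(\psi)$ where $f$ is continuous but not regular, and the closure of $P(f)$ can meet $Z(\psi)$. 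Take $X=\R^3$, $\psi=z$, and $f=x^3/(x^2+y^2)$ (extended by $0$ on the $z$-axis) restricted to $X\setminus Z(\psi)$: near the origin every denominator vanishes on the $z$-axis, which is not contained in $\{z=0\}$, so $\abs{b}\ge c\abs{\psi}^{m}$ fails on every neighbourhood of the origin even though $f$ itself is bounded there. (Note also that the lemma concerns nonsingular varieties of arbitrary dimension, not only surfaces, so $P(f)$ need not be finite.) The paper sidesteps all of this by applying the semialgebraic \L{}ojasiewicz inequality \cite[Prop.~2.6.4]{bib2} directly to the continuous function $f$; the only thing to verify is that $f$ is semialgebraic, which holds because the graph of $f$ is the Euclidean closure of the (semialgebraic) graph of the regular function $f|_{(X\setminus Z(\psi))\setminus P(f)}$. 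That observation is the one idea missing from your write-up.

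A second, smaller problem: your ``regularity part'' asserts that $\psi^{N_1}f$ can be made regular on all of $X\setminus Z(\psi)$ for $N_1$ large. This is both unnecessary and false. It is unnecessary because continuous rational only requires regularity on a Zariski dense open set, and $\psi^{N}f$ is already regular on $(X\setminus Z(\psi))\setminus P(f)$. It is false because multiplying by powers of $\psi$ does nothing at points of $P(f)$ away from $Z(\psi)$: with $\psi=x$ on $\R^2$ and $f=(x-1)^3/((x-1)^2+y^2)$, the product $\psi^{N}f$ is continuous rational but fails to be regular at $(1,0)$ for every $N$. Once you drop this step and replace the $a/b$ estimate by the semialgebraic \L{}ojasiewicz inequality applied to $f$ itself, the proof closes.
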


\begin{proof}
According to a variant of the \L{}ojasiewicz inequality
\cite[Prop.~2.6.4]{bib2}, it suffices to prove that $f$ is a semialgebraic
function. This is straightforward since the graph of the function $f$
restricted to $(X \setminus Z(\psi)) \setminus P(f)$ is a semialgebraic
subset of $(X \setminus Z(\psi)) \times \R$, whose closure is equal to
the graph of $f$.
\end{proof}

\begin{lemma}\label{lem-3-2}
Let $X$ be a nonsingular real algebraic variety and let $\{X^1, \ldots,
X^m\}$ be a Zariski open cover of $X$. Let $f_1, \ldots, f_r, \varphi$
be continuous rational functions on $X$ such that for ${j=1, \ldots, m}$
the restriction $\varphi|_{X^j}$ can be written in the form
\begin{equation*}
\varphi|_{X^j} = \sum_{i=1}^r \varphi_{ij} f_i|_{X^j},
\end{equation*}
where the $\varphi_{ij}$ are continuous rational functions on $X^j$.
Then $\varphi$ can be written in the form
\begin{equation*}
\varphi = \sum_{i=1}^r \varphi_i f_i,
\end{equation*}
where the $\varphi_i$ are continuous rational functions on $X$ with
\begin{equation*}
P(\varphi_i) \subset \bigcup_{j=1}^m (P(\varphi_{ij}) \cup (X \setminus
X^j)).
\end{equation*}
\end{lemma}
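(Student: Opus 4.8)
The main idea is standard for passing from local to global in the regulous (continuous rational) setting: multiply each local solution by a high power of a regular function vanishing outside its chart, so that the resulting functions extend continuously rationally to all of $X$ by zero, and then combine them with weights that sum to $1$ on $X$. Concretely, for each $j$ choose a regular function $\psi_j \colon X \to \R$ with $Z(\psi_j) = X \setminus X^j$; since $\{X^1, \ldots, X^m\}$ covers $X$, the functions $\psi_1^2, \ldots, \psi_m^2$ have no common zero, so $\sigma \coloneqq \psi_1^2 + \cdots + \psi_m^2$ is a nowhere-vanishing regular function on $X$, and $\theta_j \coloneqq \psi_j^2/\sigma$ is a regular function on $X$ with $\theta_1 + \cdots + \theta_m = 1$ and $Z(\theta_j) = X \setminus X^j$ (in particular $\theta_j$ vanishes on $X \setminus X^j$). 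These $\theta_j$ play the role of a regular partition of unity subordinate to the cover.

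Next I would fix $j$ and deal with the single chart $X^j$. The function $\varphi_{ij}$ is continuous rational on $X^j$, hence restricts to a continuous rational function on $X^j \setminus Z(\psi_j|_{X^j})$; actually it is already defined on all of $X^j = X \setminus Z(\psi_j)$. By Lemma~\ref{lem-3-1} applied to $\psi_j$ and $\varphi_{ij}$ (viewed on $X \setminus Z(\psi_j)$), there is an integer $N_j > 0$ such that $\psi_j^{N} \varphi_{ij}$, extended by $0$ on $Z(\psi_j) = X \setminus X^j$, is continuous rational on $X$ for every $N \ge N_j$; moreover $P(\psi_j^N \varphi_{ij}) \subset P(\varphi_{ij}) \cup (X \setminus X^j)$, since off $X \setminus X^j$ the extended function agrees with the regular-where-$\varphi_{ij}$-is-regular function $\psi_j^N \varphi_{ij}$. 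Choose one $N$ that works simultaneously for all $i$ and all $j$ (take the max of the finitely many $N_j$), and enlarge it so that additionally $\psi_j^{2N}/\sigma^{N}$-type weights behave well — in fact it is cleaner to use the weight $\theta_j^{N}$ directly: set $\Theta_j \coloneqq \theta_j^{N}$, a regular function on $X$ vanishing on $X \setminus X^j$, and note $\sum_j$ of the $\Theta_j$ need not be $1$, so instead I would renormalize at the end. A slicker route: define $\varphi_i \coloneqq \sum_{j=1}^m \theta_j \cdot (\psi_j^{N}\varphi_{ij})^{\sim} / \psi_j^{N}$ — but this reintroduces denominators, so the right formulation is the following.

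The clean construction: put $\varrho_{ij}$ to be the continuous rational extension of $\psi_j^{2N}\varphi_{ij}$ by $0$ on $X\setminus X^j$ (continuous rational by Lemma~\ref{lem-3-1} for $N$ large), and set
\begin{equation*}
\varphi_i \coloneqq \frac{1}{\sigma^{N}} \sum_{j=1}^m \varrho_{ij}.
\end{equation*}
Since $\sigma$ is nowhere zero and regular, $1/\sigma^N$ is regular on $X$, so each $\varphi_i$ is continuous rational on $X$ with $P(\varphi_i) \subset \bigcup_j P(\varrho_{ij}) \subset \bigcup_j \bigl(P(\varphi_{ij}) \cup (X \setminus X^j)\bigr)$, which is exactly the asserted bound. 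It remains to check $\varphi = \sum_i \varphi_i f_i$: on the Zariski dense open set $X \setminus \bigl(\bigcup_j (X\setminus X^j)\cup\text{(poles)}\bigr)$ we compute, using $\varrho_{ij} = \psi_j^{2N}\varphi_{ij}$ and $\psi_j^{2N} = \sigma^N \theta_j^{?}$ — here one must be slightly careful: $\psi_j^{2N}/\sigma^N$ is not literally $\theta_j^N$ unless we arrange exponents, but on a point lying in every chart we may simply write $\sum_i \varphi_i f_i = \sigma^{-N}\sum_j \psi_j^{2N} \sum_i \varphi_{ij}f_i = \sigma^{-N}\sum_j \psi_j^{2N}\varphi = \varphi$, using $\sum_i \varphi_{ij}f_i|_{X^j} = \varphi|_{X^j}$ and $\sum_j \psi_j^{2N} \neq \sigma^N$ in general — so this last identity fails. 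The fix is to take $N=1$ in the normalization and general $N$ only in Lemma~\ref{lem-3-1}: replace $\varrho_{ij}$ by the extension of $(\psi_j^{2}/\sigma)^{N}\sigma^{N-1}\psi_j^{2}\cdots$; cleanest is to first raise the cover functions to a high enough power at the outset, i.e. replace $\psi_j$ by $\psi_j^{N}$ from the start so that the extension of $\psi_j^{2}\varphi_{ij}$ (now already a high power) is continuous rational, and then $\sigma = \sum_j \psi_j^2$, $\theta_j = \psi_j^2/\sigma$, and $\sum_j \theta_j = 1$ gives $\sum_i \varphi_i f_i = \sum_j \theta_j \varphi = \varphi$ on the dense open set, hence everywhere by continuity.

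\textbf{The main obstacle} is exactly this bookkeeping of exponents: one must choose the power $N$ in Lemma~\ref{lem-3-1} uniformly over the finitely many pairs $(i,j)$ and arrange that the same power makes the partition-of-unity identity $\sum_j \theta_j = 1$ hold on the nose, so that no spurious factor of $\sigma$ survives. Once the exponents are aligned — by raising the chart-defining regular functions $\psi_j$ to the power $N$ before forming $\sigma$ and $\theta_j$ — everything else is formal: continuity of each $\varphi_i$ and the pole bound follow from Lemma~\ref{lem-3-1}, and the equation $\varphi=\sum_i\varphi_if_i$ holds on a Zariski dense open subset of $X$ and therefore, by continuity of both sides, on all of $X$.
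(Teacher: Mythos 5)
Your proposal is correct and, after the exponent bookkeeping is sorted out, lands on essentially the same construction as the paper: the paper also chooses regular $\psi_j$ with $Z(\psi_j)=X\setminus X^j$, uses Lemma~\ref{lem-3-1} to write $\varphi_{ij}=a_{ij}/\psi_j^N$ with $a_{ij}$ continuous rational on $X$, multiplies the local identities by $\psi_j^{2N}$, sums over $j$, and divides by the nowhere-vanishing regular function $b=\sum_j\psi_j^{2N}$ (your $\sigma$ after replacing $\psi_j$ by $\psi_j^N$), with the same pole bound. Your final fix — raising the $\psi_j$ to the $N$-th power before forming the partition of unity — is exactly what makes your version coincide with the paper's.
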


\begin{proof}
\setcounter{equation}{0}
We may assume that $X$ is irreducible and the $X^j$ are all nonempty.
Then each $X^j$ is Euclidean dense in $X$. We choose a regular function
$\psi_j$ on $X$ with $Z(\psi_j) = X \setminus X^j$. By
Lemma~\ref{lem-3-1}, there exists a positive integer $N$ such that the
$\varphi_{ij}$ can be written as
\begin{equation}\label{eq-3-2-1}
\varphi_{ij} = \frac{a_{ij}}{\psi_j^N} \quad \textrm{on} \ X^j,
\end{equation}
where the $a_{ij}$ are continuous rational functions on $X$. It follows
that
\begin{equation}\label{eq-3-2-2}
\psi_j^N \varphi = \sum_{i=1}^r a_{ij} f_i
\end{equation}
on $X^j$. By continuity, \eqref{eq-3-2-2} holds on $X$. Multiplying both
sides of \eqref{eq-3-2-2} by $\psi_j^N$ and summing over $j$, we get
\begin{equation}\label{eq-3-2-3}
b\varphi = \sum_{i=1}^r b_i f_i,
\end{equation}
where
\begin{equation*}
b = \sum_{j=1}^m \psi_j^{2N} \quad \textrm{and} \quad b_i = \sum_{j=1}^m a_{ij}
\psi_j^N.
\end{equation*}
The function $b$ is regular with $Z(b) = \varnothing$, which implies
that $\varphi_i \coloneqq b_i/b$ is a continuous rational function on $X$.
In view of \eqref{eq-3-2-3} we have
\begin{equation*}
\varphi = \sum_{i=1}^r \varphi_i f_i.
\end{equation*}
By construction,
\begin{equation*}
P(\varphi_i) \subset \bigcup_{j=1}^m P(a_{ij}),
\end{equation*}
while \eqref{eq-3-2-1} implies
\begin{equation*}
P(a_{ij}) \subset P(\varphi_{ij}) \cup (X \setminus X^j).
\end{equation*}
The proof is complete.
\end{proof}

We will make use of rational maps and rational functions understood in
the standard way. A \emph{rational map} $F \colon X \dashrightarrow Y$,
between real algebraic varieties $X$ and $Y$, is the equivalence class
of regular maps with values in $Y$, defined on Zariski open dense
subsets of $X$; two such regular maps $f_1 \colon X^1 \to Y$ and $f_2
\colon X^2 \to Y$ are declared to be equivalent if $f_1|_{X^0} =
f_2|_{X^0}$ for some Zariski open dense subset $X^0 \subset X^1 \cap
X^2$. We denote by $\dom(F)$ the union of all the domains of regular
maps representing $F$. Thus $F$ determines a regular map $F \colon
\dom(F) \to Y$. The polar set $\pole(F) \coloneqq X \setminus \dom(F)$
is Zariski nowhere dense in $X$. If $Y = \R$, then $F$ is called a
\emph{rational function} on $X$. The rational functions on $X$ form a
ring (a field, if $X$ is irreducible), denoted $\R(X)$.

\begin{definition}\label{def-3-3}
A rational function $R$ on a real algebraic variety $X$ is said to be
\emph{locally bounded} if for every point $p \in X$, one can find a
Zariski open dense subset $X_p \subset X$, a Euclidean open neighborhood
$U_p \subset X$ of $p$, and a real number $M_p > 0$ such that
\begin{equation*}
\abs{R(x)} \leq M_p \quad \textrm{for all} \ x \in U_p \cap \dom(R) \cap
X_p.
\end{equation*}
\end{definition}

It readily follows that the set of all locally bounded rational
functions on $X$ forms a subring of $\R(X)$.

Actually, Definition~\ref{def-3-3} would not be affected if we
substituted for each $X_p$ the set $X^{\mathrm{ns}}$ of all nonsingular
points of X. Definition~\ref{def-3-3} imposes no restriction if the
point $p$ is not in the Euclidean closure of $X^{\mathrm{ns}}$.

\begin{example}\label{ex-3-4}
Consider the Whitney umbrella $W \coloneqq (x^2 = y^2 z) \subset \R^3$.
The set of singular points of $W$ is the $z$-axis. The rational function
$1 / (z+1)$ is locally bounded on $W$, but it is not locally bounded on
the $z$-axis.
\end{example}

We will consider locally bounded rational functions only on nonsingular
varieties. A~typical example is the following.

\begin{example}\label{ex-3-5}
The rational function $xy / (x^2 + y^2)$ on $\R^2$ is locally bounded
(even bounded), but it cannot be extended to a continuous function on
$\R^2$.
\end{example}

\begin{lemma}\label{lem-3-6}
Let $X$ be a nonsingular real algebraic variety and let $R$ be a locally
bounded rational function on $X$. Then the polar set $\pole(R)$ is of
codimension at least $2$.
\end{lemma}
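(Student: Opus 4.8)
The plan is to reduce to a local, purely algebraic statement about the ring of regular functions at a point and then exploit the fact that this local ring is a UFD. First I would argue that codimension is a local notion: $\pole(R)$ is Zariski closed in $X$, so it suffices to show that no irreducible component of $\pole(R)$ has codimension $1$; equivalently, that for a fixed irreducible nonsingular $X$ and a fixed irreducible hypersurface $H \subset X$, $R$ cannot have $H$ in its polar set. Since everything is local, I would pass to a Zariski open affine neighborhood $U$ of the generic point of $H$ and write $R = a/b$ with $a, b$ regular on $U$; after shrinking $U$ I may assume $Z(b)$ is a union of irreducible hypersurfaces, and the claim becomes: if $H \subset Z(b)$ appears with positive multiplicity in a local factorization of $b$, then local boundedness is violated.

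Next I would make the multiplicity precise. Using that the local ring $\mathcal{O}_{X,p}$ at a point $p$ in the smooth locus is a UFD (as already invoked in the proof of Lemma~\ref{lem-2-3}), I can, after further shrinking $U$ around such a $p \in H$, write $b = u\, t^k c$, where $t$ is a regular function cutting out $H$ near $p$ with $\nabla t \neq 0$, the function $u$ is a unit, $c$ is regular with $H \not\subset Z(c)$, and $k \geq 1$ is the order of $b$ along $H$. If $H \subset \pole(R)$, then $a$ is not divisible by $t^k$ near $p$, so after possibly moving $p$ within $H$ I may assume $a(p) \neq 0$ and $c(p) \neq 0$. Then $|R| = |a|/(|u|\,|t|^k\,|c|)$ on a punctured Euclidean neighborhood of $p$, and since $|a|$, $|u|^{-1}$, $|c|^{-1}$ are all bounded below by a positive constant near $p$ while $|t|^k \to 0$ as we approach $p$ along a curve transverse to $H$, we get $|R(x)| \to \infty$. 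This contradicts local boundedness of $R$, because $\dom(R) \cap X_p$ is Euclidean dense near $p$ (the paper records this density fact), so the approach path can be taken inside $\dom(R) \cap X_p$ on which the bound $M_p$ is supposed to hold.

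The key steps, in order, are: (i) reduce to a single irreducible hypersurface component $H$ of $\pole(R)$ and localize at a smooth point $p$ of $H$; (ii) factor a denominator of $R$ in the UFD $\mathcal{O}_{X,p}$ to extract the order $k \geq 1$ of vanishing along $H$, and observe that if $H$ is genuinely polar then the numerator does not absorb this vanishing, so at a generic point of $H$ both the residual numerator and the cofactor are nonzero; (iii) choose a Euclidean-analytic arc through $p$ meeting $H$ only at $p$, lying in $\dom(R) \cap X_p$ away from $p$, and conclude $|R| \to \infty$ along it, contradicting Definition~\ref{def-3-3}.

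The main obstacle is step~(ii): making rigorous the passage from "$R$ has $H$ in its polar set" to "at a generic smooth point of $H$, the function $R$ really blows up." One must be careful that $\pole(R)$ is defined intrinsically (as $X \setminus \dom(R)$) rather than via an arbitrary fraction representation, so the argument should start from a locally reduced fraction near $p$, or equivalently define $k$ as the difference of the orders of vanishing of numerator and denominator along $H$ computed in the DVR $\mathcal{O}_{X,H}$ — and then check that $k < 0$ is exactly what $H \subset \pole(R)$ means. Once the order along $H$ is correctly identified as negative, the blow-up along a transverse arc and the resulting contradiction with local boundedness are routine.
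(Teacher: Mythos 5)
Your proof is correct, but it takes a genuinely different route from the paper's. The paper's proof is a two-line reduction: the function $R$ induces a rational map $R^*\colon X \dashrightarrow \PB^1(\R)$, whose indeterminacy locus has codimension at least $2$ by a classical theorem (cited from Iitaka \cite{bib7}), and local boundedness is used only to guarantee $\pole(R)=\pole(R^*)$, i.e., that $R^*$ never actually takes the value $\infty$ on a point of $\pole(R)$. You instead unwind that classical theorem: you localize at the generic point of a hypothetical codimension-one polar component $H$, use that $\mathcal{O}_{X,H}$ is a DVR (the same UFD/regularity input as in Lemma~\ref{lem-2-3}) to extract a negative order of vanishing $-k$ of $R$ along $H$, and derive $\abs{R}\to\infty$ near a generic smooth real point of $H$ through points of the Euclidean-dense set $\dom(R)\cap X_p\cap(X\setminus Z(t))$, contradicting Definition~\ref{def-3-3}. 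You correctly flag and resolve the one real subtlety -- that the order along $H$ must be computed intrinsically in $\mathcal{O}_{X,H}$ rather than from an arbitrary fraction, and that $H\subset\pole(R)$ is equivalent to that order being negative -- which is exactly the content hidden in the paper's identity $\pole(R)=\pole(R^*)$ combined with Iitaka's theorem. The trade-off: the paper's argument is shorter and cleanly separates the purely algebraic codimension-two statement (valid for any rational function via its $\PB^1$-valued extension) from the role of boundedness, while yours is self-contained, avoids the projective compactification entirely, and makes explicit where local boundedness enters (it forbids a pole along any real hypersurface). Minor cosmetic points only: the reduction ``$Z(b)$ is a union of irreducible hypersurfaces'' is neither needed nor quite true over $\R$ (e.g.\ $b=x^2+y^2$), and ``moving $p$ within $H$'' after factoring in $\mathcal{O}_{X,p}$ should be replaced by first factoring in $\mathcal{O}_{X,H}$ and then choosing $p$ -- which is precisely the fix you yourself propose in your closing paragraph.
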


\begin{proof}
Using the inclusion $\R \subset \PB^1(\R)$, we obtain a rational map
$R^* \colon X \dashrightarrow \PB^1(\R)$ determined by $R$. The polar
set $\pole(R^*)$ is of codimension at least $2$ \cite[p.~129,
Thm.~2.17]{bib7}. Since $R$ is locally bounded, we have $\pole(R) =
\pole(R^*)$, which completes the proof.
\end{proof}

\begin{remark}\label{rem-3-7}
Let $X$ be a nonsingular real algebraic variety. Any continuous rational
function $f$ on $X$ determines a rational function $\tilde{f}$ on $X$,
which is represented by the regular function $f|_{X \setminus P(f)}$.
Clearly, $P(f) = \pole(\tilde{f})$. Furthermore, if $g$ is a continuous
rational function on $X$, not identically equal to $0$ on any
irreducible component of $X$, then the quotient $\tilde{f} / \tilde{g}$
is a well defined rational function on $X$ (see Lemma~\ref{lem-2-2}). To
simplify notation, we will prefer to say ``the rational function $f$''
or ``the rational function $f/g$'' instead of writing $\tilde{f}$ or
$\tilde{f}/\tilde{g}$, respectively. For the rational function $f/g$, we
have
\begin{equation*}
\pole(f/g) \subset P(f) \cup P(g) \cup Z(g).
\end{equation*}
\end{remark}

\begin{lemma}\label{lem-3-8}
Let $X$ be an irreducible nonsingular real algebraic variety and let
$\varphi, f_1, \ldots, f_r$ be continuous rational functions on $X$,
where the $f_i$ are not all identically equal to $0$. For $i=1, \ldots,
r$ and $\mathbf{c} = (c_1, \ldots, c_r) \in \R^r$, let
\begin{equation*}
R_{\mathbf{c}i} \coloneqq \frac{(\varphi - (c_1 f_1 + \cdots + c_r f_r))
f_i}{f_1^2 + \cdots + f_r^2}.
\end{equation*}
If $\varphi$ satisfies the PT for $f_1, \ldots, f_r$, then each rational
function $R_{\mathbf{c}i}$ is locally bounded on $X$.
\end{lemma}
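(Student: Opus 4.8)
The plan is to reduce the claim to a purely local, pointwise boundedness statement and then invoke the pointwise test directly. Fix a point $p \in X$. Since $\varphi$ satisfies the PT for the $f_i$, condition \ref{def-1-3-b} provides real numbers $c_1^{(p)}, \ldots, c_r^{(p)}$ and a decomposition $\varphi = \varphi^{(p)} + c_1^{(p)}f_1 + \cdots + c_r^{(p)}f_r$ such that the functions $A_i^{(p)} = \varphi^{(p)}f_i/(f_1^2 + \cdots + f_r^2)$ (set to $0$ on $Z^{(p)}$) are continuous at $p$. In particular each $A_i^{(p)}$ is bounded on some Euclidean neighborhood $U_p$ of $p$.

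The key algebraic observation is that $R_{\mathbf{c}i}$ and $R_{\mathbf{c'}i}$ differ, on $X \setminus Z(f_1,\ldots,f_r)$, by a regular function: indeed $R_{\mathbf{c}i} - R_{\mathbf{c'}i} = \big((c'_1 - c_1)f_1 + \cdots + (c'_r - c_r)f_r\big)f_i/(f_1^2 + \cdots + f_r^2)$, and this is an $\R$-linear combination of the functions $f_jf_i/(f_1^2 + \cdots + f_r^2)$, each of which is continuous rational on $X$ (being bounded by $\tfrac12$ in absolute value off $Z(f_1,\ldots,f_r)$ and zero on it — a standard fact, cf. Example~\ref{ex-3-5} for the shape of the argument, together with the \L{}ojasiewicz-type extension used in Lemma~\ref{lem-3-1}). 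Hence, for the fixed $p$, the difference between $R_{\mathbf{c}i}$ (for the given $\mathbf{c}$ in the statement) and $R_{\mathbf{c}^{(p)}i}$ is continuous on $X$, in particular bounded near $p$. Since $R_{\mathbf{c}^{(p)}i}$ agrees with $A_i^{(p)}$ on $X \setminus Z^{(p)}$, and $A_i^{(p)}$ is bounded near $p$, we conclude that $R_{\mathbf{c}i}$ is bounded on $U_p \cap (X \setminus Z(f_1,\ldots,f_r))$, shrinking $U_p$ if necessary.

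It remains to package this as local boundedness in the sense of Definition~\ref{def-3-3}. Take $X_p = X \setminus Z(f_1,\ldots,f_r)$, which by Lemma~\ref{lem-2-2} is Zariski open dense in $X$ since the $f_i$ are not all identically $0$ and $X$ is irreducible; note also $\dom(R_{\mathbf{c}i}) \supset X_p$ because $f_1^2 + \cdots + f_r^2$ is a nonzero regular function off $Z(f_1,\ldots,f_r)$ and the numerator is regular there. Then the bound just obtained says $\abs{R_{\mathbf{c}i}(x)} \leq M_p$ for all $x \in U_p \cap \dom(R_{\mathbf{c}i}) \cap X_p$, which is exactly the required inequality. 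Since $p$ was arbitrary, $R_{\mathbf{c}i}$ is locally bounded on $X$.

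The main obstacle I anticipate is the bookkeeping around which set $R_{\mathbf{c}i}$ is actually a well-defined rational function on, and verifying rigorously that the auxiliary functions $f_jf_i/(f_1^2 + \cdots + f_r^2)$ extend continuous-rationally across $Z(f_1,\ldots,f_r)$ — this needs the semialgebraic/\L{}ojasiewicz input rather than being formal. Everything else is a direct translation of the PT into a neighborhood-wise bound and an observation that changing the constants $\mathbf{c}$ only perturbs $R_{\mathbf{c}i}$ by something globally continuous, hence harmless for boundedness.
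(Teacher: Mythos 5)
Your argument is essentially the paper's own: exploit the bound $\abs{f_if_j/(f_1^2+\cdots+f_r^2)} \le \tfrac12$ off $Z(f_1,\ldots,f_r)$ to show that changing $\mathbf{c}$ perturbs $R_{\mathbf{c}i}$ only by something bounded, then read a local bound for $R_{\mathbf{c}^{(p)}i}$ off the continuity of the $A_i^{(p)}$ supplied by the pointwise test. Two of your intermediate claims, however, are false as stated and need repair. First, the functions $f_if_j/(f_1^2+\cdots+f_r^2)$ are in general \emph{not} continuous rational on $X$: boundedness off $Z(f_1,\ldots,f_r)$ does not yield a continuous extension across it, and the paper's Example~\ref{ex-3-5} (the function $xy/(x^2+y^2)$, bounded by $\tfrac12$ yet admitting no continuous extension to the origin) is precisely a counterexample to what you assert; the citation of that example in support of the claim points the wrong way. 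Fortunately you never need continuity: the $\tfrac12$ bound alone shows that $R_{\mathbf{c}i}-R_{\mathbf{c}^{(p)}i}$ is bounded on $X\setminus Z(f_1,\ldots,f_r)$, which is all the argument uses, so the claim should simply be weakened to boundedness.

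Second, your choice $X_p = X\setminus Z(f_1,\ldots,f_r)$ does not meet the requirements of Definition~\ref{def-3-3}: since the $f_i$ are only continuous rational, $Z(f_1,\ldots,f_r)$ is Euclidean closed but in general not Zariski closed, so its complement need not be Zariski open, and Lemma~\ref{lem-2-2} does not say what you attribute to it --- it says that the \emph{Zariski closure} $V$ of $Z(f_1,\ldots,f_r)$ is Zariski nowhere dense. The repair is exactly the step the paper takes at the end of its proof: set $X_p = X\setminus V$, which is Zariski open dense; since $X\setminus V\subset X\setminus Z(f_1,\ldots,f_r)$, the bound you obtained on $U_p\cap(X\setminus Z(f_1,\ldots,f_r))$ restricts to $U_p\cap\dom(R_{\mathbf{c}i})\cap X_p$. (A smaller slip of the same kind: $\dom(R_{\mathbf{c}i})$ need not contain all of $X\setminus Z(f_1,\ldots,f_r)$, because the numerator and denominator are regular only off the polar sets $P(\varphi), P(f_1),\ldots,P(f_r)$; this one is harmless, since Definition~\ref{def-3-3} intersects with $\dom(R_{\mathbf{c}i})$ in any case.)
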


\begin{proof} Let $Z \coloneqq Z(f_1, \ldots, f_r)$ and let $S \subset
X$ be an arbitrary subset. The $R_{\mathbf{c}i}$ are well defined
functions on $X \setminus Z$. Setting $R_i = R_{\mathbf{0}i}$, where
$\mathbf{0} = (0, \ldots, 0) \in \R^r$, we get
\begin{equation*}
R_{\mathbf{c}i} = R_i - \frac{c_1 f_i f_1 + \cdots + c_r f_i
f_r}{f_1^2 + \cdots + f_r^2}.
\end{equation*}
Since
\begin{equation*}
\abs{\frac{f_i f_j}{f_1^2 + \cdots + f_r^2}} \leq \frac{1}{2} \quad
\textrm{on} \ X \setminus Z,
\end{equation*}
it follows that $R_{\mathbf{c}i}$ is bounded on $S \cap (X \setminus Z)$
if and only if $R_i$ is such.

Suppose that $\varphi$ satisfies the PT for $f_1, \ldots, f_r$, fix a
point $p \in X$, and set $Z^{(p)} = Z \cup \{p\}$. The function
$\varphi$ can be written in the form
\begin{equation*}
\varphi = \varphi^{(p)} + c_1^{(p)} f_1 + \cdots + c_r^{(p)} f_r,
\end{equation*}
where $c_i^{(p)} \in \R$ and the functions $A_i^{(p)}$ on $X$, defined by
\begin{equation*}
A_i^{(p)} = \frac{\varphi^{(p)}f_i}{f_1^2 + \cdots + f_r^2} \quad
\textrm{on} \ X \setminus Z^{(p)} \quad \textrm{and} \quad A_i^{(p)} = 0
\quad \textrm{on} \ Z^{(p)},
\end{equation*}
are continuous at $p$. In particular, the $A_i^{(p)}$ are bounded on some
Euclidean open neighborhood $U_p \subset X$ of $p$. Consequently, the
functions $R_{\mathbf{c}^{(p)}i}$, where $\mathbf{c}^{(p)} =
(c_1^{(p)}, \ldots, c_r^{(p)})$, are bounded on $U_p \cap (X \setminus
Z)$, which in turn implies that the $R_i$ are bounded on $U_p \cap (X
\setminus Z)$. This conclusion remains valid if $Z$ is replaced by its
Zariski closure $V$ in $X$. The proof is complete since the subset $X
\setminus V \subset X$ is Zariski open dense by Lemma~\ref{lem-2-2}.
\end{proof}

\begin{proof}[Proof of Theorem~\ref{th-1-6}]
It suffices to prove that (\ref{th-1-6-b}) implies (\ref{th-1-6-a})
together with the extra conditions stipulated on the $\varphi_i$. Let us
suppose that (\ref{th-1-6-b}) holds. We may assume that $X$ is
irreducible and the $f_i$ are not all identically equal to $0$.
According to Lemma~\ref{lem-2-4}, for each point $p \in X$, we can find
a Zariski open neighborhood $X^{(p)} \subset X$ of $p$ and continuous
functions $\alpha_1^{(p)}, \ldots, \alpha_r^{(p)}$ on $X^{(p)}$ such
that
\begin{equation*}
\varphi = \alpha_1^{(p)} f_1 + \cdots + \alpha_r^{(p)} \quad \textrm{on} \
X^{(p)}
\end{equation*}
and
\begin{equation*}
\alpha_i^{(p)} = c_i^{(p)} + R_i^{(p)} \quad \textrm{on} \ X^{(p)} \setminus
Z,
\end{equation*}
where $Z = Z(f_1, \ldots, f_r)$, $c_i^{(p)} \in \R$, and
\begin{equation*}
R_i^{(p)} = \frac{(\varphi - (c_1^{(p)} f_1 + \cdots + c_r^{(p)} f_r))
f_i}{f_1^2 + \cdots + f_r^2} \quad \textrm{on} \ X^{(p)} \setminus Z.
\end{equation*}
We regard the $R_i^{(p)}$ as rational functions on $X$ and set
\begin{equation*}
X_0^{(p)} \coloneqq \dom(R_1^{(p)}) \cap \ldots \cap \dom(R_r^{(p)}).
\end{equation*}
Clearly,
\begin{equation*}
X \setminus X_0^{(p)} \subset Z.
\end{equation*}
Furthermore, according to Lemmas~\ref{lem-3-6} and~\ref{lem-3-8}, $X
\setminus X_0^{(p)}$ is a finite set. Since the set $X^{(p)} \setminus
Z$ is Euclidean dense in $X$ (see Lemma~\ref{lem-2-2}), it follows that
\begin{equation*}
\alpha_i^{(p)} = R_i^{(p)} \quad \textrm{on} \ X^{(p)} \cap X_0^{(p)}.
\end{equation*}
Consequently, we obtain a well defined continuous rational function
$\beta_i^{(p)}$ on ${X_1^{(p)} \coloneqq X^{(p)} \cup X_0^{(p)}}$ by
setting
\begin{equation*}
\beta_i^{(p)} = \alpha_i^{(p)} \quad \textrm{on} \ X^{(p)} \quad
\textrm{and} \quad
\beta_i^{(p)} = c_i^{(p)} + R_i^{(p)} \quad \textrm{on} \ X_0^{(p)}.
\end{equation*}
By construction,
\begin{equation*}
\varphi = \beta_1^{(p)} f_1 + \cdots + \beta_r^{(p)} f_r \quad \textrm{on} \
X_1^{(p)}.
\end{equation*}

Now it is easy to complete the proof. We choose a finite collection of
points $p_1, \ldots, p_m$ in~$X$ so that the sets $X^j \coloneqq
X_1^{(p_j)}$ form a cover of $X$. Setting $\varphi_{ij} \coloneqq
\beta_i^{(p_j)}$, we get
\begin{equation*}
\varphi|_{X^j} = \sum_{i=1}^r \varphi_{ij}|_{X^j}, \quad
P(\varphi_{ij}) \subset (P(\varphi) \cup Z \cup P(f_1) \cup \ldots \cup P(f_r))
\cap (X \setminus X_0^{(p_j)}).
\end{equation*}
By Lemma~\ref{lem-3-2}, there exist continuous rational functions
$\varphi_1, \ldots, \varphi_r$ on $X$ such that
\begin{equation*}
\varphi = \varphi_1 f_1 + \cdots + \varphi_r f_r
\end{equation*}
and
\begin{equation*}
P(\varphi_i) \subset \bigcup_{j=1}^r (P(\varphi_{ij}) \cup (X \setminus
X^j)).
\end{equation*}
The functions $\varphi_i$ satisfy all the requirements.
\end{proof}

%\cleardoublepage
\phantomsection
\addcontentsline{toc}{section}{\refname}

\end{document}